\newcounter{thmcounter}
\numberwithin{equation}{section}
\numberwithin{thmcounter}{section}
\newtheorem{theorem}[thmcounter]{Theorem}
\newtheorem{proposition}[thmcounter]{Proposition}
\newtheorem{lemma}[thmcounter]{Lemma}
\newtheorem{corollary}[thmcounter]{Corollary}
\theoremstyle{definition}
\newtheorem{example}[thmcounter]{Example}
\newtheorem{remark}[thmcounter]{Remark}
\newtheoremstyle{claim}{9pt}{3pt}{}{\parindent}{\bf}{.}{1em}{}
\theoremstyle{claim}
\newenvironment{namelist}[1]{%
\begin{list}{}
{
\settowidth{\labelwidth}{#1}%
\setlength{\labelsep}{0.3em}%
\setlength{\leftmargin}{\labelwidth}%
\addtolength{\leftmargin}{\labelsep}}}{%
\end{list}}
\newcommand{\nZ}{\mathbf{Z}}                   
\newcommand{\nC}{\mathbf{C}}                     
\newcommand{\kk}{\mathbf{k}}         
\newcommand{\nP}{\mathbf{P}}
\newcommand{\sF}{\mathscr{F}}
\newcommand{\sG}{\mathscr{G}}
\newcommand{\sO}{\mathscr{O}}                  
\newcommand{\sI}{\mathscr{I}}
\newcommand{\mf}[1]{\mathfrak{#1}}
\DeclareMathOperator{\Cliff}{Cliff}
\DeclareMathOperator{\coker}{coker}
\DeclareMathOperator{\gon}{gon}
\DeclareMathOperator{\im}{im}
\DeclareMathOperator{\id}{id}
\DeclareMathOperator{\Sing}{Sing}
\DeclareMathOperator{\sgn}{sgn}           
\DeclareMathOperator{\reg}{reg}
\DeclareMathOperator{\rank}{rank}             
\newcounter{rkcounter}             
\begin{document}

\title[Syzygies of tangent surfaces and K3 carpets]{Syzygies of tangent developable surfaces\\ and K3 carpets via secant varieties}

\author{Jinhyung Park}
\address{Department of Mathematical Sciences, KAIST, 291 Daehak-ro, Yuseong-gu, Daejeon 34141, Republic of Korea}
\email{parkjh13@kaist.ac.kr}

\date{\today}
\subjclass[2020]{14N05, 14N07, 13D02}
\thanks{J. Park was partially supported by the National Research Foundation (NRF) funded by the Korea government (MSIT) (NRF-2021R1C1C1005479).}

\begin{abstract} 
We give simple geometric proofs of Aprodu--Farkas--Papadima--Raicu--Weyman's theorem on syzygies of tangent developable surfaces of rational normal curves and Raicu--Sam's result on syzygies of K3 carpets.  As a consequence, we obtain a quick proof of Green's conjecture for general curves of genus $g$ over an algebraically closed field $\mathbf{k}$ with $\operatorname{char}(\mathbf{k}) = 0$ or $\operatorname{char}(\mathbf{k}) \geq \lfloor (g-1)/2 \rfloor$. We also show the arithmetic normality of tangent developable surfaces of arbitrary smooth projective curves of large degree.
\end{abstract}

\maketitle


\section{Introduction}

Let $C $ be a smooth projective curve of genus $g \geq 3$ over the field $\nC$ of complex numbers. If $\Cliff(C) \geq 1$, i.e., $C$ is nonhyperelliptic, then $K_C$ is very ample. In this case, Noether's theorem says that the canonical curve $C \subseteq \nP^{g-1}$ is projectively normal. Petri's theorem states that if $\Cliff(C) \geq 2$, then the defining ideal $I_{C|\nP^{g-1}}$ of $C$ in $\nP^{g-1}$ is generated by quadrics. To generalize classical theorems of Noether and Petri, in the early 1980's, Green \cite[Conjecture 5.1]{Green} formulated a very famous conjecture that predicts 
$$
K_{p,2}(C, K_C)=0 ~~\text{ for $0 \leq p \leq \Cliff(C)-1$}.
$$
By Green--Lazarsfeld's nonvanishing theorem \cite[Appendix]{Green}, we have $K_{p,2}(C, K_C) \neq 0$ for $\Cliff(C) \leq p \leq g-3$. Then Green's conjecture determines the shape of the minimal free resolution of the canonical ring $R(C, K_C)=\bigoplus_{m \in \nZ} H^0(C, mK_C)$ (see \cite[Remark 4.19]{AN}). Although the conjecture is still open, Voisin \cite{Voisin1,  Voisin2} resolved the general curve case in the early 2000's. To prove generic Green's conjecture, it suffices to exhibit one curve of each genus for which the assertion holds. If $S \subseteq \nP^g$ is a K3 surface of degree $2g-2$, then its general hyperplane section is a canonical curve $C \subseteq \nP^{g-1}$ and $K_{p,2}(S, \sO_S(1)) = K_{p,2}(C, K_C)$. Based on the Hilbert scheme interpretation of Koszul cohomology, Voisin accomplished sophisticated cohomology computations on Hilbert schemes of K3 surfaces to show $K_{p,2}(S, \sO_S(1)) = 0$. For an introduction to Voisin's work, see \cite{AN}. Recently, Kemeny \cite{Kemeny} gave a simpler proof of Voisin's theorem for even genus case and a steamlined version of her arguments for odd genus case.  

\medskip

Prior to Voisin's work, O'Grady and Buchweitz--Schreyer independently observed that one could use the \emph{tangent developable surface} $T \subseteq \nP^g$ of a rational normal curve of degree $g$ to solve Green's conjecture for general curves of genus $g$ (see \cite{Eisenbud}). Note that $T \subseteq \nP^g$ is arithmetically Cohen--Macaulay as for a K3 surface. One can actually view $T$ as a degenerate K3 surface  (see \cite[Remark 6.6]{AFPRW1}). A general hyperplane section of $T$ is a canonically embedded $g$-cuspidal rational curve $\overline{C} \subseteq \nP^{g-1}$ of degree $2g-2$, which is degenerated to a general canonical curve $C \subseteq \nP^{g-1}$ with $\Cliff(C) = \lfloor (g-1)/2 \rfloor$. By the upper semicontinuity of graded Betti numbers, $K_{p,2}(\overline{C}, \sO_{\overline{C}}(1)) = K_{p, 2}(T, \sO_T(1)) = 0$ implies $K_{p,2}(C, K_C)=0$  (see \cite[Section 6]{AFPRW1}). The required vanishing of $K_{p,2}(T, \sO_T(1))$ for confirming generic Green's conjecture was finally established by Aprodu--Farkas--Papadima--Raicu--Weyman \cite{AFPRW1} just a few years ago. Their important result gives not only an alternative proof of generic Green's conjecture but also an extension to positive characteristic. This circle of ideas is surveyed in  \cite{EL}. In this paper, we give a simple geometric proof of the main result of \cite{AFPRW1}. 

\begin{theorem}\label{thm:AFPRW}
Let $T \subseteq \nP^g$ be the tangent developable surface of a rational normal curve of degree $g \geq 3$ over an algebraically closed field $\kk$ with $\operatorname{char}(\kk) = 0$ or $\operatorname{char}(\kk) \geq (g+2)/2$. Then
$$
K_{p, 2}(T, \sO_T(1)) = 0~~\text{ for $0 \leq p \leq \lfloor (g-3)/2 \rfloor$}.
$$
\end{theorem}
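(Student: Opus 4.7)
The plan is to exploit the natural inclusion $T \subseteq \Sigma := \Sigma_1(C) \subseteq \nP^g$, where $\Sigma$ is the secant variety of the rational normal curve $C$ of degree $g$. The variety $\Sigma$ is a three-dimensional arithmetically Cohen--Macaulay subvariety whose minimal graded free resolution is classically given by an Eagon--Northcott type complex associated to a catalecticant matrix, so that its Koszul cohomology $K_{p,q}(\Sigma,\sO_\Sigma(1))$ vanishes in $q = 2$ and $q = 3$ for all $p \leq \lfloor (g-3)/2 \rfloor$. This is the ``known half'' of the argument and sets the target for what must be proved about $T$.

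I would then pass to the secant-bundle resolution $\pi : Y = \nP(\sF) \to \Sigma$, where $\sF$ is the rank-two secant bundle on $\Sym^2(C) = \nP^2$, and realize $T$ as the birational image of an explicit Cartier divisor $\widetilde{T} \subseteq Y$ supported over the diagonal conic $\Delta \subseteq \nP^2$. Feeding the short exact sequence
$$
0 \longrightarrow \sI_{T/\Sigma} \longrightarrow \sO_\Sigma \longrightarrow \sO_T \longrightarrow 0
$$
into the long exact sequence of Koszul cohomology with respect to $\sO_{\nP^g}(1)$ and invoking the vanishings on $\Sigma$ reduces the theorem to proving $K_{p-1,3}(\nP^g,\sI_{T/\Sigma}) = 0$ in the same range. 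Pulling back along $\pi$, this becomes a vanishing statement for a line bundle of the form $\sO_Y(-\widetilde{T}) \otimes \pi^*\sO_{\nP^g}(n)$ on the smooth threefold $Y$, which via the projection formula descends to Bott-type cohomology on the $\nP^1$-bundle $Y \to \nP^2$ that can be computed by hand.

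The principal obstacle is the precise identification of $\widetilde{T}$ as a divisor on $Y$: one must pin down its class in $\Pic(Y)$ and track how it interacts with the Koszul bundle $M_{\sO_{\nP^g}(1)}$ pulled back to $Y$. The numerical threshold $p \leq \lfloor (g-3)/2 \rfloor$ should emerge from the degree of $\Delta$ as a conic in $\nP^2$ combined with the catalecticant range of $\Sigma$, and the characteristic hypothesis $\operatorname{char}(\kk) \geq (g+2)/2$ should appear at the final step: in positive characteristic one cannot simply cite Borel--Weil--Bott, so the required cohomology vanishings on the $\nP^1$-bundle over $\nP^2$ will have to be checked by an explicit, characteristic-bounded calculation. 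I would anticipate the bound arising from the binomial coefficients attached to the tautological filtration on $Y$.
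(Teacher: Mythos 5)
Your reduction rests on a false vanishing. The secant variety $\Sigma$ of the rational normal curve is indeed arithmetically Cohen--Macaulay and resolved by an Eagon--Northcott type complex attached to the catalecticant matrix, but that resolution lives entirely in weight $2$: the ideal $I_{\Sigma|\nP^g}$ is generated by the \emph{cubic} $3\times 3$ minors, so $K_{p,1}(\Sigma,\sO_\Sigma(1))=0$ for all $p$ while $K_{p,2}(\Sigma,\sO_\Sigma(1))\neq 0$ for $1\leq p\leq g-3$. For instance, when $g=5$ the Hilbert--Burch resolution is $0\to S(-4)^3\to S(-3)^4\to S\to S_\Sigma\to 0$, so $K_{1,2}(\Sigma,\sO_\Sigma(1))\cong\kk^4\neq 0$ even though $\lfloor (g-3)/2\rfloor=1$; in general $K_{p,2}(\Sigma,\sO_\Sigma(1))=H^2(B,\wedge^{p+2}M_H)=\wedge^{p+2}S^{g-2}U\otimes H^2(\nP^2,\sO_{\nP^2}(-p-2))$, which is nonzero for $p\geq 1$. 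Consequently the long exact sequence coming from $0\to\sI_{T/\Sigma}\to\sO_\Sigma\to\sO_T\to 0$ only gives
$$
K_{p,2}(\Sigma,\sO_\Sigma(1))\longrightarrow K_{p,2}(T,\sO_T(1))\longrightarrow K_{p-1,3}(\Sigma,\sI_{T/\Sigma};\sO_\Sigma(1)),
$$
and killing the right-hand term does not kill the middle one. The entire content of the theorem is that the map $K_{p,2}(\Sigma,\sI_{T/\Sigma};\sO_\Sigma(1))\to K_{p,2}(\Sigma,\sO_\Sigma(1))$ is \emph{surjective}, i.e.\ that every weight-$2$ syzygy of $\Sigma$ lifts; your plan never engages with this map. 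The correct starting point is the identification $\sI_{T|\Sigma}\cong\omega_\Sigma$ (coming from $K_B+\widetilde{T}+Z=0$ on the blow-up $B=\nP(E)$), after which the map in question factors as the dual of the Wahl (Gaussian) map of the conic $Q\subseteq C_2=\nP^2$ followed by a Koszul differential, and its surjectivity is a genuine Koszul-module vanishing proved by a Castelnuovo--Mumford regularity argument on $\nP^{p+2}$.

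Your expectation for where the characteristic hypothesis enters is also off: cohomology of line bundles on the $\nP^1$-bundle $B\to\nP^2$ is characteristic-free, so no Bott-type issue arises there. The hypothesis $\operatorname{char}(\kk)=0$ or $\operatorname{char}(\kk)\geq (g+2)/2$ is used (i) to split the restricted jet bundle $E|_Q\cong\sO_{\nP^1}(g-1)^{\oplus 2}$, (ii) to guarantee that the coefficients $(j-i)b_i$ appearing in the Wahl map do not vanish, so that the images $\Delta_{p+2}^{\vee}(v_j)$ remain linearly independent, and (iii) to make $\wedge^pK^{\vee}$ a direct summand of $(K^{\vee})^{\otimes p}$ in the regularity step. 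This is sharp: for $2\leq\operatorname{char}(\kk)\leq (g+1)/2$ the relevant injectivity fails and indeed $K_{\lfloor (g-3)/2\rfloor,2}(T,\sO_T(1))\neq 0$, so no characteristic-free cohomology computation on $B$ could suffice.
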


The proof of Theorem \ref{thm:AFPRW} in \cite{AFPRW1} goes as follows. We have a short exact sequence
$$
\xymatrix{
0 \ar[r] & \sO_T \ar[r] & \nu_* \sO_{\widetilde{T}} \ar[r] & \omega_{\nP^1} \ar[r] & 0,
}
$$
where $\nu \colon \widetilde{T} = \nP^1 \times \nP^1 \to T$ is a resolution of singularities. Then it is elementary to see that 
$$
K_{p,2}(T, \sO_T(1)) = \coker\big(K_{p,1}(T, \nu_* \sO_{\widetilde{T}}; \sO_T(1)) \xrightarrow{~\gamma~} K_{p,1}(\nP^1, \omega_{\nP^1}; \sO_{\nP^1}(g)) \big).
$$
Let $U:=H^0(\nP^1, \sO_{\nP^1}(1))$, $V:=D^{p+2} U$, $W:=D^{2p+2} U$, and $q:=g-p-3$. The authors of \cite{AFPRW1} devoted considerable effort to show that $\gamma$ arises as the composition
\begin{equation}\label{eq:gammaintro}
S^{q} V \otimes W \xrightarrow{\id_{S^{q}V} \otimes \Delta}  S^{q} V \otimes \wedge^2 V \xrightarrow{~\delta~}  \ker(S^{q+1}V \otimes V \xrightarrow{~\delta~} S^{q+2}V),
\end{equation}
where $\Delta$ is the \emph{co-Wahl map} and $\delta$ is the \emph{Koszul differential}. To achieve this,  they established an explicit characteristic-free Hermite reciprocity for $\mf{sl}_2$-representations, and they carried out complicated algebraic computations. Now, $K_{p,2}(T, \sO_T(1))$ is the homology of a complex
$$
S^q V \otimes W \xrightarrow{~\gamma~} S^{q+1}V \otimes V \xrightarrow{~\delta~} S^{q+2} V.
$$
This homology, denoted by $W_q(V, W)$, is the degree $q$ piece of the \emph{Koszul module} (or \emph{Weyman module}) associated to $(V, W)$. It is enough to prove that 
\begin{equation}\label{eq:vanKosmod}
W_q(V, W) = 0~~\text{ for $q \geq p$}.
\end{equation}
The vanishing result (\ref{eq:vanKosmod}) was first proved in characteristic zero in  \cite{AFPRW2}  by an application of Bott vanishing, and the argument is extended in \cite{AFPRW1} to positive characteristics. 

\medskip

Our strategy to prove Theorem \ref{thm:AFPRW} is essentially the same as of \cite{AFPRW1}, but our geometric approach utilizing the \emph{secant variety} $\Sigma \subseteq \nP^g$ of a rational normal curve $C$ of degree $g$ provides a substantial simplification of the proof. The tangent surface $T$ is a Weil divisor on $\Sigma$, and $\widetilde{T}$ is a Cartier divisor on $B$, where $\beta \colon B \to \Sigma$ is the blow-up of $\Sigma$ along $C$ with the exceptional divisor $Z=\nP^1 \times \nP^1$. Letting $M_H := \beta^* M_{\sO_{\Sigma}(1)}$, we realize (\ref{eq:gammaintro}) as maps induced in cohomology of vector bundles on $B$ (see (\ref{eq:coWahl+Koszul})):
$$
H^1(\widetilde{T}, \wedge^{p+2} M_H|_{\widetilde{T}}) \xrightarrow{~\alpha~} H^2(B, \wedge^{p+2} M_H \otimes \omega_B(Z)) \xrightarrow{~\delta~} H^2(Z, \omega_{\nP^1} \boxtimes \wedge^{p+2} M_L \otimes \omega_{\nP^1}).
$$
This was previously asked in \cite[last paragraph in p.666]{AFPRW1}. There is a rank two vector bundle $E$ on $\nP^2$ such that $B=\nP(E)$. If $\pi \colon B \to \nP^2$ is the canonical fibration, then $Q:=\pi(\widetilde{T})$ is a smooth conic and $\widetilde{T}=\nP(E|_Q)$. It is easy to check that $\alpha = \id_{S^qV} \otimes \Delta$, where $\Delta$ is the dual of the restriction map $H^0(\nP^2, \sO_{\nP^2}(p+1)) \longrightarrow H^0(Q, \sO_Q(p+1))$. 
Put $M_E:=\pi_* M_H$ and $\sigma:=\pi|_Z$. The map $\delta$ is naturally factored as
\begin{footnotesize}
$$
\xymatrixrowsep{0.15in}
\xymatrixcolsep{0.5in}
\xymatrix{
H^2(B, \wedge^{p+2} M_H \otimes \omega_B(Z)) \ar@{=}[d] \ar[r]^-{\id_{S^q V} \otimes \iota} & H^2(Z, \sigma^* \wedge^{p+2} M_E \otimes (\omega_{\nP^1} \boxtimes \omega_{\nP^1})) \ar@{=}[d] \ar[r]^-{m \otimes \id_V} & H^2(Z, \omega_{\nP^1} \boxtimes \wedge^{p+2} M_L \otimes \omega_{\nP^1})\ar@{=}[d] \\
S^q V \otimes \wedge^2 V & S^q V \otimes V \otimes V & S^{q+1} V \otimes V,
}
$$
\end{footnotesize}\\[-5pt]
where $\iota$ is the canonical injection and $m$ is the multiplication map identified with
$$
H^1(\nP^{p+2} \times \nP^1, \sO_{\nP^{p+2}}(q) \boxtimes \omega_{\nP^1}(-p-2)) \longrightarrow H^1(\nP^{p+2} \times \nP^1, \sO_{\nP^{p+2}}(q+1) \boxtimes \omega_{\nP^1}).
$$
Without any lengthy computation, we quickly obtain the key part of \cite{AFPRW1} -- the descriptions of the maps in (\ref{eq:gammaintro}) (see Lemma \ref{lem:gamma_{p_2}}). This provides a conceptual explanation of the difficult computation in \cite{AFPRW1}  and a geometric understanding of syzygies of $T$ in $\nP^g$ as expected in \cite[last paragraph in p.666]{AFPRW1}.
Next, regarding $V=H^0(\nP^{p+2}, \sO_{\nP^{p+2}}(1))$, we give a direct proof of (\ref{eq:vanKosmod}) using vector bundles on $\nP^{p+2}$. This part of the proof is largely equivalent to the original proof in \cite{AFPRW1, AFPRW2}.\footnote{After writing the paper, the author learned from Claudiu Raicu that a similar argument proving (\ref{eq:vanKosmod}) directly on projective spaces is given in his lecture notes \cite{Raicu} based on Robert Lazarsfeld's suggestion.} 
Put $M_V:=M_{\sO_{\nP^{p+2}}(1)}$. Then (\ref{eq:gammaintro}) can be identified with
$$
W \otimes H^0(\nP^{p+2}, \sO_{\nP^{p+2}}(q)) \longrightarrow \wedge^2 V  \otimes H^0(\nP^{p+2}, \sO_{\nP^{p+2}}(q)) \longrightarrow H^0(\nP^{p+2}, M_V(q+1)).
$$
Under the assumption $\operatorname{char}(\kk) = 0$ or $\operatorname{char}(\kk) \geq (g+2)/2$, we show that $W \otimes \sO_{\nP^{p+2}} \to M_V(1)$ is surjective and its kernel is $(p+1)$-regular in the sense of Castelnuovo--Mumford. This implies (\ref{eq:vanKosmod}). Here the characteristic assumption  plays a crucial role (see Remark \ref{rem:charassump}).

\medskip

It is worth noting that the characteristic assumption on the field $\kk$ in Theorem \ref{thm:AFPRW} cannot be improved. If $2 \leq \operatorname{char}(\kk) \leq (g+1)/2$, then $K_{\lfloor (g-3)/2 \rfloor, 2}(T, \sO_T(1)) \neq 0$ (see \cite[Remark 5.17]{AFPRW1}). Theorem \ref{thm:AFPRW} implies generic Green's conjecture for general curve of genus $g$ when $\operatorname{char}(\kk) = 0$ or $\operatorname{char}(\kk) \geq (g+2)/2$, but this is not optimal. Raicu--Sam \cite[Theorem 1.5]{RS2} recently obtained a sharp result that Green's conjecture holds for general curves of genus $g$ when $\operatorname{char}(\kk) = 0$ or $\operatorname{char}(\kk) \geq \lfloor(g-1)/2 \rfloor$. This confirms a conjecture of Eisenbud--Schreyer \cite[Conjecture 1.1]{ES}. For the failure of Green's conjecture in small characteristic, see \cite{ES, Schreyer2}. It has long been known that generic Green's conjecture would follow from the canonical ribbon conjecture \cite{BE}. A \emph{canonical ribbon} is a hyperplane section of a \emph{K3 carpet} $X=X(a,b) \subseteq \nP^{a+b+1}$ for integers $b \geq a \geq 1$, which is a unique double structure on a rational normal surface scroll $S(a,b) \subseteq \nP^{a+b+1}$ of type $(a,b)$ such that $\omega_X = \sO_X$ and $h^1(X, \sO_X)=0$ (see \cite[Theorem 1.3]{GP}). The K3 carpet $X$ is degenerated to a K3 surface of degree $2(a+b)$, and a canonical ribbon is degenerated to a general canonical curve of genus $a+b+1$ with $\Cliff(C) = a$. By extending algebraic arguments of \cite{AFPRW1}, Raicu--Sam  \cite[Theorem 1.1]{RS2} proved $K_{p,2}(X, \sO_{X}(1))=0$ for $0 \leq p \leq a$ when  $\operatorname{char}(\kk) = 0$ or $\operatorname{char}(\kk) \geq a$. This implies the canonical ribbon conjecture and hence generic Green's conjecture (see \cite[Section 6]{RS2}). However, for settling Eisenbud--Schreyer's conjecture, we only need to consider the case $a=\lfloor (g-1)/2 \rfloor$ and $b=\lfloor g/2 \rfloor$, and we recover \cite[Theorem 1.1]{RS2} for this case here.

\begin{theorem}\label{thm:K3carpet}
Let $X=X(\lfloor (g-1)/2 \rfloor, \lfloor g/2 \rfloor) \subseteq \nP^g$ be a K3 carpet with $g \geq 3$ over an algebraically closed field $\kk$ with $\operatorname{char}(\kk) = 0$ or $\operatorname{char}(\kk) \geq \lfloor (g-1)/2 \rfloor$ and $\operatorname{char}(\kk) \neq 2$. Then
$$
K_{p, 2}(X, \sO_X(1)) = 0~~\text{ for $0 \leq p \leq \lfloor (g-3)/2 \rfloor$}.
$$
In particular, Green's conjecture holds for general curves of genus $g$ over $\kk$.
\end{theorem}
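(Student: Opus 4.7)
The plan is to mimic the proof of Theorem~\ref{thm:AFPRW}, replacing the tangent developable surface $T$ by the K3 carpet $X$ and its normalization $\widetilde{T}=\nP^1\times\nP^1$ by the smooth scroll $S=S(a,b)\subseteq\nP^g$, where $a=\lfloor (g-1)/2\rfloor$ and $b=\lfloor g/2\rfloor$ (so $a+b=g-1$). The double structure $S\subseteq X$ with $\omega_X=\sO_X$ and $h^1(X,\sO_X)=0$ forces the ideal $\I_{S/X}$ to be $\omega_S$, giving a short exact sequence
$$
0\longrightarrow \omega_S\longrightarrow \sO_X\longrightarrow \sO_S\longrightarrow 0
$$
that plays exactly the role of $0\to\sO_T\to\nu_*\sO_{\widetilde{T}}\to\omega_{\nP^1}\to 0$ in the tangent developable case.

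First I would exploit the fact that $S\subseteq\nP^g$ is arithmetically Cohen--Macaulay with pure linear Eagon--Northcott resolution, so $K_{p,2}(S,\sO_S(1))=0$ for every $p$. Kodaira vanishing on the scroll gives $H^1(S,\omega_S(m))=0$ for $m\geq 0$, so the displayed sequence induces a short exact sequence of graded modules and hence a long exact sequence in Koszul cohomology on $\nP^g$. Exactly as in the ``elementary'' computation recalled just below Theorem~\ref{thm:AFPRW}, this produces
$$
K_{p,2}(X,\sO_X(1))=\coker\bigl(K_{p,1}(X,\sO_X;\sO_X(1))\xrightarrow{~\gamma~}K_{p,1}(S,\omega_S;\sO_S(1))\bigr),
$$
and the goal reduces to showing that $\gamma$ is surjective for $0\leq p\leq \lfloor (g-3)/2\rfloor$.

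Next I would use the $\nP^1$-bundle $\pi\colon S=\nP(\sO_{\nP^1}(a)\oplus\sO_{\nP^1}(b))\to\nP^1$ together with $M_H:=M_{\sO_S(1)}$, the kernel of evaluation on $S$, to realize both Koszul groups above as cohomology of twists of $\wedge^{p+1}M_H$. Pushing down through $\pi$ via the projection formula turns $M_H$ into a divided-power $\mathfrak{sl}_2$-representation on $\nP^1$, and I expect $\gamma$ to factor through \eqref{eq:gammaintro} with $U=H^0(\nP^1,\sO_{\nP^1}(1))$ and with $V$, $W$, $q$ determined by $a$, $b$, $p$. As in the tangent developable argument, the first factor is the dual of a restriction map (the co-Wahl map) and the second is the Koszul multiplication, both realized as maps induced in the cohomology of vector bundles on $S$.

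The principal obstacle is this last identification: one must transport the $\mathfrak{sl}_2$-action carefully through the Leray spectral sequence for $\pi$, and the twist $\omega_S=\sO_S(-2)\otimes\pi^*\sO_{\nP^1}(g-3)$ shifts the bigrading in a nontrivial way that must be matched against the co-Wahl map arising from restriction to a degree-two subscheme. Once it is in hand, the surjectivity of $\gamma$ reduces to the Koszul module vanishing \eqref{eq:vanKosmod}, which applies precisely under our characteristic hypothesis (the condition $\operatorname{char}(\kk)\neq 2$ being needed to make the double structure behave). Finally, the ``In particular'' clause follows by specializing a general canonical curve of genus $g$ to a hyperplane section of $X$ -- a canonical ribbon of Clifford index $\lfloor (g-1)/2\rfloor$ -- and invoking upper semicontinuity of graded Betti numbers, exactly as in the tangent developable case.
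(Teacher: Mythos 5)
Your route is genuinely different from the paper's. The paper never touches the double-structure sequence $0\to\omega_{S(a,b)}\to\sO_X\to\sO_{S(a,b)}\to 0$ on the scroll; instead it observes that $X$ is the image under $\beta$ of $\pi^{-1}(2\ell)\subseteq B=\nP(E)$ for a line $\ell\subseteq\nP^2=C_2$, i.e.\ $X$ is a Weil divisor on the secant variety $\Sigma$ of the rational normal curve linearly equivalent to the tangent developable $T$. The sequence $0\to\omega_\Sigma\to\sO_\Sigma\to\sO_X\to 0$ then lets the entire proof of Theorem \ref{thm:AFPRW} run unchanged: Lemma \ref{lem:gamma_{p_2}} is stated for an arbitrary $\overline{S}\in|2\delta|$, and the only new input is that for $\overline{S}=2\ell$ the dual co-Wahl map factors through $\tau_{p+2}$ of (\ref{eq:tau}) rather than the Wahl map, after which a short linear-independence check finishes the argument. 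What you propose is essentially the Raicu--Sam strategy (work directly with the carpet over the scroll), which is legitimate in principle but is a much longer road, and as written it is a plan rather than a proof.

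Two concrete gaps. First, the heart of the matter --- the identification of your connecting map with the composition (\ref{eq:gammaintro}) of a co-Wahl-type map and the Koszul differential --- is exactly the step you defer (``I expect $\gamma$ to factor through\dots''). This is where all the work lives (in Raicu--Sam it occupies most of the paper, via bi-graded Koszul modules), and nothing in your sketch indicates how to carry it out; note also that your displayed cokernel formula cannot be literally correct, since the maps of graded modules go $R(S,\omega_S)\to R(X)\to R(S)$, so there is no map $K_{p,1}(X,\sO_X;\sO_X(1))\to K_{p,1}(S,\omega_S;\sO_S(1))$; what the long exact sequence actually gives (using $K_{p,2}(S(a,b),\sO(1))=0$) is $K_{p,2}(X,\sO_X(1))=\coker\bigl(K_{p+1,1}(S(a,b),\sO(1))\to K_{p,2}(S(a,b),\omega_{S(a,b)};\sO(1))\bigr)$ with the connecting homomorphism. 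Second, and more seriously, you invoke the vanishing (\ref{eq:vanKosmod}) ``which applies precisely under our characteristic hypothesis,'' but (\ref{eq:vanKosmod}) as established for Theorem \ref{thm:AFPRW} concerns the specific pair $(V,W)$ with $W=D^{2p+2}U$ and requires $\operatorname{char}(\kk)\geq (g+2)/2$. The whole point of Theorem \ref{thm:K3carpet} is the improved bound $\operatorname{char}(\kk)\geq\lfloor(g-1)/2\rfloor$, and that improvement comes from the fact that the carpet produces a \emph{different} $W$ (namely $H^1(2\ell,\sO_{2\ell}(-p-2))$, with $\Delta_{p+2}^{\vee}$ factoring through $\tau_{p+2}$), for which the surjectivity of (\ref{eq:claim}) --- equivalently the injectivity of (\ref{eq:injforduals}) --- holds under the weaker hypothesis, while the regularity argument of Lemma \ref{lem:vankosmod} only needs $\operatorname{char}(\kk)\geq p+1$. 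You neither identify this $W$ nor verify the injectivity for it, so the characteristic bound in the statement is not accounted for by your argument. (Minor point: $\operatorname{char}(\kk)\neq 2$ enters through the rational-curve setup, e.g.\ $Q\cong C$ and $E|_Q$, not merely to ``make the double structure behave,'' and is redundant for $g\geq 7$.)
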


Recall that Schreyer \cite{Schreyer2} verified Green's conjecture for every curve of genus $g \leq 6$ over an algebraically closed field of arbitrary characteristic. In the theorem, if $g \geq 7$, then the condition $\operatorname{char}(\kk) \neq 2$ is redundant. Our proof of Theorem \ref{thm:K3carpet} is essentially different from that of \cite{RS2} but surprisingly the same as that of Theorem \ref{thm:AFPRW}. The key point is that the K3 carpet $X$ in the theorem is a Weil divisor linearly equivalent to the tangent developable surface $T$ on the secant variety $\Sigma$. Thus $X$ is degenerated to the tangent surface $T$. The proof of Theorem \ref{thm:AFPRW} works for $X$, and the characteristic assumption on $\kk$ for (\ref{eq:vanKosmod}) with another $W$ can be improved. Consequently, we obtain a quick proof of generic Green's conjecture. 

\medskip

In view of Theorem \ref{thm:AFPRW}, it is quite natural to study syzygies of tangent developable surfaces of smooth projective curves of genus $g \geq 1$. As a first step, we show the arithmetic normality, and compute the Castelnuovo--Mumford regularity.

\begin{theorem}\label{thm:arithnorm}
Let $C$ be a smooth projective curve of genus $g \geq 1$ over an algebraically closed field of characteristic zero, $L$ be a line bundle on $C$ with $\deg L \geq 4g+3$, and $T$ be the tangent developable surface of $C$ embedded in $\nP^r$ by  $|L|$. Then $T \subseteq \nP^r$ is arithmetically normal but not arithmetically Cohen--Macaulay, and $H^i(T, \sO_T(m))=0$ for $i>0, m>0$ but $H^1(T, \sO_T) \neq 0, H^2(T, \sO_T) \neq 0$. In particular, $\reg \sO_T = 3$.
\end{theorem}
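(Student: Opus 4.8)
The plan is to resolve $T$ by a $\nP^1$-bundle over $C$ and transfer every cohomological question on $T$ to the curve $C$. Let $\sP:=P^1_C(L)$ be the sheaf of first principal parts of $L$, a rank-two bundle sitting in $0\to\omega_C\otimes L\to\sP\to L\to0$, and set $\widetilde{T}:=\nP(\sP)$ with projection $\pi\colon\widetilde{T}\to C$ and tautological quotient $\xi=\sO_{\nP(\sP)}(1)$, so that $\pi_*\xi^m=\Sym^m\sP$. The subsystem $|L|\subseteq H^0(\widetilde{T},\xi)$ of $1$-jets defines the tangent-surface map $\nu\colon\widetilde{T}\to T\subseteq\nP^r$ with $\nu^*\sO_T(1)=\xi$; it is finite and birational, an isomorphism away from the cuspidal section $C_0\subseteq\widetilde{T}$ cut out by the sub-line-bundle $\omega_C\otimes L\hookrightarrow\sP$, which maps isomorphically onto the cuspidal edge $C\subseteq T$. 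First I would record the conductor sequence
\begin{equation}\label{eq:cond}
0\longrightarrow\sO_T\longrightarrow\nu_*\sO_{\widetilde{T}}\stackrel{q}{\longrightarrow}\omega_C\longrightarrow0,
\end{equation}
where $\omega_C$ is supported on $C$ and $q$ is the ``first normal derivative along $C_0$''. The identification of the quotient as $\omega_C$ is forced by the geometry: $C_0$ is a section of $\pi$, so $N_{C_0/\widetilde{T}}=(\omega_C\otimes L)^{\vee}\otimes L=T_C$ and the conductor quotient $N^{\vee}_{C_0/\widetilde{T}}$ equals $\omega_C$ independently of $L$, while $\sO_T(1)|_C=\xi|_{C_0}=L$.

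Twisting \eqref{eq:cond} by $\sO_T(m)$ and using finiteness of $\nu$ identifies $H^i(T,\nu_*\sO_{\widetilde{T}}(m))$ with $H^i(\widetilde{T},\xi^m)=H^i(C,\Sym^m\sP)$ for $m\ge0$, while the third term contributes $H^i(C,\omega_C\otimes L^m)$. Both vanish for $i\ge2$ since $C$ is a curve, so $H^2(\sO_T(m))$ is a quotient of $H^1(C,\omega_C\otimes L^m)=H^0(C,L^{-m})^{\vee}$, which is $0$ for $m>0$. For $H^1$ I would exploit the filtration of $\Sym^m\sP$ induced by $0\to\omega_C\otimes L\to\sP\to L\to0$, whose graded pieces are $\omega_C^{\otimes i}\otimes L^m$ for $0\le i\le m$; each has degree $\ge m\deg L>2g-2$ when $m\ge1$, so $H^1(C,\Sym^m\sP)=0$ and hence $H^1(\sO_T(m))=\coker\big(H^0(C,\Sym^m\sP)\xrightarrow{q_*}H^0(C,\omega_C\otimes L^m)\big)$. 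To see $q_*$ is onto, note that on $F^1:=\ker(\Sym^m\sP\to L^m)$ the conductor map is the canonical projection $F^1\twoheadrightarrow F^1/F^2=\omega_C\otimes L^m$, and $H^1(F^2)=0$ because its graded pieces $\omega_C^{\otimes i}\otimes L^m$ with $i\ge2$ again have degree $>2g-2$; thus $H^0(F^1)\twoheadrightarrow H^0(\omega_C\otimes L^m)$ and $q_*$ is surjective. This yields $H^i(\sO_T(m))=0$ for $i>0$ and $m>0$.

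For the two nonvanishings I would read off the low-degree part of \eqref{eq:cond}. The edge map $H^0(\nu_*\sO_{\widetilde{T}})=\kk\to H^0(\omega_C)$ is zero (constants have vanishing normal derivative), so the connecting map embeds $H^0(C,\omega_C)=\kk^g\hookrightarrow H^1(\sO_T)$, giving $H^1(\sO_T)\ne0$ and hence that $T$ is not arithmetically Cohen--Macaulay. Next, $H^2(\sO_T)=\coker\big(H^1(\widetilde{T},\sO_{\widetilde{T}})\xrightarrow{q_*}H^1(C,\omega_C)\big)$; since the Leray isomorphism $\pi^*\colon H^1(C,\sO_C)\xrightarrow{\sim}H^1(\widetilde{T},\sO_{\widetilde{T}})$ represents every class by a fiberwise-constant (pulled-back) cocycle, and $q_*$ is a derivative in the fiber direction normal to the section $C_0$, this map is zero, so $H^2(\sO_T)\cong H^1(C,\omega_C)\cong\kk\ne0$. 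Combining, $\sO_T$ is $3$-regular (from $H^1(\sO_T(2))=H^2(\sO_T(1))=0$) but not $2$-regular (from $H^2(\sO_T)\ne0$), so $\reg\sO_T=3$.

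It remains to prove arithmetic normality, i.e. surjectivity of $H^0(\nP^r,\sO(m))\to H^0(T,\sO_T(m))$ for all $m$, equivalently $H^1(\nP^r,\sI_{T}(m))=0$. The degree-one case is linear normality: a Riemann--Roch computation in \eqref{eq:cond} gives $h^0(\sO_T(1))=\chi(\sP)-\chi(\omega_C\otimes L)=2\deg L-(\deg L+g-1)=\deg L-g+1=h^0(L)$, and since the jet map $H^0(L)\hookrightarrow H^0(\sO_T(1))=\ker(q_*)$ is injective it is an isomorphism. I would then reduce projective normality to surjectivity of the multiplication maps $H^0(\sO_T(1))\otimes H^0(\sO_T(m))\to H^0(\sO_T(m+1))$ for $m\ge1$, and prove these by comparing with the multiplication $H^0(\sP)\otimes H^0(\Sym^m\sP)\to H^0(\Sym^{m+1}\sP)$ on $C$ through the conductor filtration, so that the cokernel is governed by $H^1$ of an explicit kernel bundle built from $\sP$. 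Here the hypothesis $\deg L\ge 4g+3$ is exactly what forces these kernel bundles to be $0$-regular, via a Castelnuovo--Mumford/Butler-type normal-generation estimate for $\Sym^{\bullet}\sP$. This last step --- pinning down the precise positivity threshold $4g+3$ for normal generation on the singular surface $T$ itself, rather than merely on its smooth resolution --- is the part I expect to require the most care; the regularity computation and the two nonvanishings above are comparatively formal once the conductor sequence \eqref{eq:cond} and the identification of its quotient are in hand.
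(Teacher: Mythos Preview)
Your treatment of the cohomology of $\sO_T(m)$ via the conductor sequence and the filtration on $\Sym^m\sP$ is correct and is actually more self-contained than the paper's argument: the paper routes everything through the secant variety $\Sigma$ (so that $T$ sits as a Weil divisor on an arithmetically Cohen--Macaulay threefold), uses the identification $\sI_{T|\Sigma}\cong\beta_*\sO_B(-\widetilde T-Z)$ to transfer the question to the blow-up $B=\nP(E)\to\Sigma$, and invokes $\omega_T\cong\sO_T$ (proved via the secant-variety machinery) to get $h^2(\sO_T)=1$. Your direct argument avoids all of this. One small remark: your justification that $H^1(\widetilde T,\sO)\to H^1(C,\omega_C)$ vanishes is informal; the cleanest way to say it is that the composite $\sO_C=\pi_*\sO_{\widetilde T}\hookrightarrow\nu_*\sO_{\widetilde T}\xrightarrow{q}\omega_C$ is zero as a map of sheaves (locally, pulled-back functions are constant in the fiber coordinate $v$, hence have zero $v$-derivative), so the induced map on $H^1$ is zero.

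The genuine gap is in arithmetic normality, specifically $2$-normality. The paper's point is that $2$-normality of $T$ is \emph{equivalent} to the vanishing $H^1\big(C\times C,(L\boxtimes L)(-3D)\big)=0$ (their Lemma~4.2), and this is a theorem of Bertram--Ein--Lazarsfeld which is where the bound $\deg L\ge 4g+3$ actually enters; the equivalence is obtained by computing $H^1(B,\sO_B(2H-\widetilde T-Z))=H^1(C_2,T_L(-4\delta))$ on the symmetric square. Your proposed route via the multiplication maps $H^0(\sO_T(1))\otimes H^0(\sO_T(m))\to H^0(\sO_T(m+1))$ does not make this connection. A Butler-type argument would control $H^0(\sP)\otimes H^0(\Sym^m\sP)\to H^0(\Sym^{m+1}\sP)$, but you are multiplying only by the strict subspace $j(H^0(L))\subsetneq H^0(\sP)$ of $1$-jets, and once you chase the filtration the obstruction to surjectivity in the top graded piece $\omega_C^{m+1}\otimes L^{m+1}$ involves products $ds\cdot(\text{stuff})$, i.e.\ the Gaussian/Wahl map of $L$. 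That is exactly the content of the BEL theorem, and it does not fall out of a slope estimate on $\sP$ alone. So unless you either reprove BEL or reduce your cokernel to $H^1\big((L\boxtimes L)(-3D)\big)$, the step ``the cokernel is governed by $H^1$ of an explicit kernel bundle built from $\sP$, and $\deg L\ge 4g+3$ forces this to be $0$-regular'' is not justified and is the missing idea in your proof.
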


To prove the theorem, we use methods for secant varieties developed in \cite{ENP1}, and we show that the dualizing sheaf $\omega_T$ is trivial (Proposition \ref{prop:omegaS}). The hard part is to check the $2$-normality of $T \subseteq \nP^r$, which is turned out to be equivalent to $H^1(C \times C, (L \boxtimes L)(-3D)) = 0$, where $D$ is the diagonal of $C \times C$. This cohomology vanishing was established in \cite{BEL} when $\deg L \geq 4g+3$. This degree condition is optimal. In fact, if $\deg L = 4g+2$, then $T \subseteq \nP^r$ is arithmetically normal if and only if $C$ is neither elliptic nor hyperelliptic (see Remark \ref{rem:degL=4g+2}). We will further discuss syzygies of tangent developable surfaces in Remark \ref{rem:syztansurf}.

\medskip

After setting notations and presenting basic facts in Section \ref{sec:prelim}, we prove Theorems \ref{thm:AFPRW} and \ref{thm:K3carpet} in Section \ref{sec:proof1} and Theorem \ref{thm:arithnorm} in Section \ref{sec:proof2}. We work over an algebraically closed field $\kk$.

\subsection*{Acknowledgements}
This paper grew out from attempts to understand the algebraic arguments of \cite{AFPRW1} and \cite{RS2} in a geometric way, so the present author wishes to express his deep gratitude to the authors of those papers, particularly to Claudiu Raicu for valuable comments. The author is grateful to Lawrence Ein, Sijong Kwak, Robert Lazarsfeld, and Wenbo Niu, and he benefited from interesting discussions about \texttt{Macaulay2} computations in Example \ref{ex:Bettitable} with Kiryong Chung and Jong In Han.

\section{Preliminaries}\label{sec:prelim}

\subsection{Syzygies}
Let $X$ be a projective scheme, $B$ be a coherent sheaf on $X$, and $L$ be a very ample line bundle on $X$. The \emph{Koszul cohomology} $K_{p,q}(X, B; L)$ is the cohomology of the Koszul-type complex
\begin{footnotesize}
$$
\begin{array}{l}
\wedge^{p+1} H^0(X, L) \otimes H^0(X, B \otimes L^{q-1}) \longrightarrow \wedge^p  H^0(X, L)\otimes H^0(X, B \otimes L^q)
\longrightarrow \wedge^{p-1}H^0(X, L) \otimes H^0(X, B \otimes L^{q+1}).
\end{array}
$$
\end{footnotesize}\\[-15pt]
When $B=\sO_X$, we put $K_{p,q}(X, L) :=K_{p,q}(X, \sO_X; L)$ and $\kappa_{p,q}(X, L):=\dim_{\kk} K_{p,q}(X, L)$. Let $S:= \bigoplus_{m \geq 0} S^mH^0(X, L)$, and view $R=R(X,B;L):=\bigoplus_{m \in \mathbb{Z}} H^0(X, B \otimes L^m)$ as a graded $S$-module. If 
$$
 \xymatrix{
 0 & R \ar[l]& E_0 \ar[l]  & E_1  \ar[l] & \ar[l] ~\cdots~ & \ar[l] E_r \ar[l]  &  \ar[l]0 
 }
$$
is a minimal free resolution of $R$, then $E_p = \bigoplus_{q} K_{p,q}(X, B; L)  \otimes_{\kk} S(-p-q)$. We may think that $K_{p,q}(X, B; L)$ is the space of $p$-th syzygies of weight $q$.  For a globally generated vector bundle $E$ on $X$, we denote by $M_E$ the kernel of the evaluation map $H^0(X, E) \otimes \sO_X \to E$. The following is well-known.

\begin{proposition}[{cf. \cite[Proposition 2.1]{Park}}]\label{prop:koszulcoh}
Assume that $H^i(X, B \otimes L^m)=0$ for $i >0$ and $m >0$. For $q \geq 2$, we have $K_{p,q}(X, B; L)=H^{q-1}(X, \wedge^{p+q-1}M_L \otimes B \otimes L)$. If furthermore $H^{q-1}(X, B) = H^q(X, B) = 0$, then $K_{p,q}(X, B; L)=H^{q}(X, \wedge^{p+q}M_L \otimes B)$.
\end{proposition}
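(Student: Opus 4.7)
The plan is to construct, via the evaluation sequence, a locally free resolution of a suitable twist of $\wedge^n M_L$ and then extract $K_{p,q}(X, B; L)$ from the associated hypercohomology spectral sequence. Setting $V := H^0(X, L)$, I will start from $0 \to M_L \to V \otimes \sO_X \to L \to 0$, which yields short exact sequences
\begin{equation*}
0 \longrightarrow \wedge^k M_L \longrightarrow \wedge^k V \otimes \sO_X \longrightarrow \wedge^{k-1} M_L \otimes L \longrightarrow 0
\end{equation*}
for each $k \geq 1$. Splicing these together iteratively (twisting the $k$-th by $L^{n-k}$ before splicing) produces an exact complex
\begin{equation*}
0 \longrightarrow \wedge^n M_L \longrightarrow \wedge^n V \otimes \sO_X \longrightarrow \wedge^{n-1} V \otimes L \longrightarrow \cdots \longrightarrow \wedge^0 V \otimes L^n \longrightarrow 0
\end{equation*}
that serves as a right resolution of $\wedge^n M_L$ by locally free sheaves.

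For the first identification, I will take $n = p+q-1$ and tensor by $B \otimes L$, producing a resolution $E^\bullet$ of $\wedge^{p+q-1} M_L \otimes B \otimes L$ whose terms $E^r = \wedge^{p+q-1-r} V \otimes B \otimes L^{1+r}$ all carry $L$-twist $\geq 1$. The hypothesis $H^i(X, B \otimes L^m) = 0$ for $i > 0$ and $m > 0$ then forces $H^s(E^r) = 0$ for every $s > 0$, so the hypercohomology spectral sequence collapses to
\begin{equation*}
H^{q-1}(X, \wedge^{p+q-1} M_L \otimes B \otimes L) \;=\; \mathcal{H}^{q-1}\bigl(H^0(X, E^\bullet)\bigr).
\end{equation*}
A routine check of indices shows that at position $r = q-1$, the complex $H^0(X, E^\bullet)$ reads $\wedge^{p+1} V \otimes H^0(B \otimes L^{q-1}) \to \wedge^p V \otimes H^0(B \otimes L^q) \to \wedge^{p-1} V \otimes H^0(B \otimes L^{q+1})$, which is precisely the three-term complex defining $K_{p,q}(X, B; L)$.

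The second identification will follow the same scheme with $n = p+q$ and twist by $B$ alone, so $E^r = \wedge^{p+q-r} V \otimes B \otimes L^r$. The new feature is that $E^0 = \wedge^{p+q} V \otimes B$ need not have vanishing higher cohomology, and this is where the main technical care is required. The hard part will be to verify that the hypercohomology spectral sequence still degenerates at total degree $q$: I will check that every $E_1^{r, q-r}$ with $r < q$ vanishes (using the original hypothesis for $r \geq 1$ and the extra assumption $H^q(X, B) = 0$ for $r = 0$), and that the only potentially nonzero incoming higher differential $d_q : E_q^{0, q-1} \to E_q^{q, 0}$ is killed by the additional hypothesis $H^{q-1}(X, B) = 0$. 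Granted this, we obtain $H^q(X, \wedge^{p+q} M_L \otimes B) = \mathcal{H}^q(H^0(X, E^\bullet)) = K_{p,q}(X, B; L)$ upon matching $\wedge^p V \otimes H^0(B \otimes L^q)$ with position $r = q$, yielding the second formula.
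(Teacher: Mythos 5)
Your argument is correct and is essentially the standard one: the paper gives no proof of this proposition, citing \cite[Proposition 2.1]{Park} instead, and the proof there is the same cohomology chase along the spliced Koszul resolution $0 \to \wedge^n M_L \to \wedge^n V \otimes \sO_X \to \cdots \to L^n \to 0$ that you package via the hypercohomology spectral sequence. The only step worth spelling out is that the induced maps on global sections of your resolution are the Koszul differentials (contraction against the evaluation map), which is what identifies $\mathcal{H}^{q-1}$, resp.\ $\mathcal{H}^{q}$, of the global-section complex with $K_{p,q}(X,B;L)$; your treatment of the $E^0$-term in the second case, where the extra hypotheses $H^{q-1}(X,B)=H^q(X,B)=0$ kill both $E_1^{0,q}$ and the differential out of $E_q^{0,q-1}$, is exactly right.
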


\subsection{Castelnuovo--Mumford regularity}
A coherent sheaf $\sF$ on $\nP^n$ is said to be \emph{$m$-regular} if $H^i(\nP^n, \sF(m-i))=0$ for $i>0$. By Mumford's theorem \cite[Theorem 1.8.3]{Lazarsfeld}, if $\sF$ is $m$-regular, then $\sF$ is $(m+1)$-regular.  The \emph{Castelnuovo--Mumford regularity} $\reg \sF$ is the minimum $m$ such that $\sF$ is $m$-regular. If $\sF$ fits into an exact sequence $\cdots \to \sF_2 \to \sF_1 \to \sF_0 \to \sF \to 0$ of coherent sheaves on $\nP^n$ and $\sF_i$ is $(m+i)$-regular for each $i \geq 0$, then $\sF$ is $m$-regular (\cite[Example 1.8.7]{Lazarsfeld}). If $\sF$ is $m$-regular and $E$ is an $m'$-regular vector bundle on $\nP^n$, then $\sF \otimes E$ is $(m+m')$-regular (\cite[Proposition 1.8.9]{Lazarsfeld}). We can think of the regularity of a coherent sheaf $\sG$ on a closed subscheme $X \subseteq \nP^n$ by regarding $\sG$ as a sheaf on $\nP^n$. 

\subsection{Multilinear algebra}
Let $V$ be a finite dimensional vector space over $\kk$. The symmetric group $\mf{S}_n$ naturally acts on $V^{\otimes n}$ by permuting the factors. The \emph{divided power} $D^n V$ is the subspace $\{ \omega \in V^{\otimes n} \mid \sigma(\omega) = \omega~\text{for all $\sigma \in \mf{S}_n$} \} \subseteq T^n V$, and the \emph{symmetric power} $S^n V$ is the quotient of $V^{\otimes n}$ by the span of $\sigma(\omega) - \omega$ for all $\omega \in  V^{\otimes n}$ and $\sigma \in \mf{S}_n$. We have a natural identification $D^n V = (S^n V^{\vee})^{\vee}$. By composing the inclusion of $D^n V$ into $V^{\otimes n}$ with the projection onto $S^n V$, we get a natural map $D^n V \to S^n V$. This is an isomorphism if $\operatorname{char}(\kk) =0$ or $\operatorname{char}(\kk) > n$, but it may be neither injective nor surjective in general. The \emph{wedge product} $\wedge^n V$ is the quotient of $V^{\otimes n}$ by the span of $v_1 \otimes \cdots \otimes v_n$ for all $v_1,\ldots,v_n \in V$ with $v_i \neq v_j$ for some $i \neq j$. We write $v_1 \wedge \cdots \wedge v_n$ for the class of $v_1 \otimes \cdots \otimes v_d$ in the quotient. There is a natural inclusion $\wedge^n V \to V^{\otimes n}$ given by $v_1 \wedge \cdots \wedge v_n \mapsto \sum_{\sigma \in \mf{S}_n} \sgn(\sigma) \sigma(v_1 \otimes \cdots \otimes v_n)$. This gives a splitting of the quotient map $V^{\otimes n} \to \wedge^n V$ if and only if $\operatorname{char}(\kk) =0$ or $\operatorname{char}(\kk) > n$. We refer to \cite[Section 3]{AFPRW1} for more details.

\subsection{Projective spaces} 
Throughout the paper, we put $U:=H^0(\nP^1, \sO_{\nP^1}(1))$, and fix a basis $1, x$ of $U$.
The monomials $1, x, \ldots, x^d$ form a basis of $S^d U$, and the divided power monomials $x^{(0)}, x^{(1)}, \ldots, x^{(d)}$ form a basis of $D^d U$. Let $1, y$ be the dual basis of $U^{\vee}$ to $1,x$. There is a natural indentification $U^{\vee} = \wedge^2 U \otimes U^{\vee} = U$ sending $1,y$ to $-x, 1$. Then $H^0(\nP^1, \sO_{\nP^1}(d)) = S^d U$ and $H^1(\nP^1, \sO_{\nP^1}(-d-2)) = D^d U$. Note that $M_{\sO_{\nP^1}(d)} = S^{d-1} U \otimes \sO_{\nP^1}(-1)$. 

\medskip

We may regard $\nP^n$ as the symmetric product of $\nP^1$. By permuting the components, $\mf{S}_{n}$ acts on the ordinary product $(\nP^1)^n$, and the line bundle $\sO_{\nP^1}(d)^{\boxtimes n}$ on $(\nP^1)^n$  descends to a line bundle $T_{n, \sO_{\nP^1}(d)} = \sO_{\nP^n}(d)$ on $\nP^n$ in such a way that $q_n^* T_{n, \sO_{\nP^1}(d)} = \sO_{\nP^1}(d)^{\boxtimes n}$, where $q_n \colon (\nP^1)^n \to \nP^n$ is the quotient map. Then $H^0(\nP^n, \sO_{\nP^n}(d)) = D^n S^d U$. Since $H^0(\nP^n, \sO_{\nP^n}(1)) = D^n U$, we get $H^0(\nP^n, \sO_{\nP^n}(d)) = S^d D^n U$. This gives \emph{Hermite reciprocity} $D^n S^d U = S^d D^n U$. 
If the action of $\mf{S}_{n}$ on $(\nP^1)^n$ is alternating, then $\sO_{\nP^1}(d)^{\boxtimes n}$ descends to $N_{n, \sO_{\nP^1}(d)} = \sO_{\nP^n}(d-n+1)$ and $H^0(\nP^n, \sO_{\nP^n}(d-n+1)) = \wedge^n S^d U$. This gives another \emph{Hermite reciprocity} 
\begin{equation}\label{eq:Hermite}
\wedge^n S^d U = S^{d-n+1} D^n U.
\end{equation}
See \cite[Remark 3.2]{AFPRW1} and \cite[Subsection 2.3]{Park}. Our Hermite reciprocity coincides with the explicit map constructed in \cite[Section 3]{AFPRW1} (see \cite{RS1}), but we will not use this fact. 

\medskip

Let $D_n$ be the image of the injective map $\nP^{n-1} \times \nP^1 \to \nP^{n} \times \nP^1$ given by $(\xi, z) \mapsto (\xi+z, z)$. Note that the effective divisor $D_n$ on $\nP^n \times \nP^1$ is defined by
$$
\sum_{i=0}^n (-1)^i (x^0 \wedge \cdots \wedge \widehat{x^i} \wedge \cdots \wedge x^n)  \otimes  x^i  \in  \wedge^n S^n U  \otimes S^n U = H^0(\nP^n \times \nP^1, \sO_{\nP^n}(1) \boxtimes \sO_{\nP^1}(n)).
$$
Consider the short exact sequence
\begin{equation}\label{eq:sesD_n}
\xymatrix{
0 \ar[r] & \sO_{\nP^n}(d-1) \boxtimes \sO_{\nP^1}(-n) \ar[r]^-{\cdot D_n} & \sO_{\nP^n}(d) \boxtimes \sO_{\nP^1} \ar[r] & \sO_{\nP^{n-1}}(d) \boxtimes \sO_{\nP^1}(d) \ar[r] & 0.
}
\end{equation}
Pushing forward to $\nP^1$ yields a short exact sequence
\begin{small}
$$
\xymatrix{
0 \ar[r] & \wedge^n M_{\sO_{\nP^1}(d+n-1)} \ar[r] & \wedge^n S^{d+n-1} U \otimes \sO_{\nP^1} \ar[r] &  \wedge^{n-1} M_{\sO_{\nP^1}(d+n-1)} \otimes \sO_{\nP^1}(d+n-1) \ar[r] & 0.
}
$$
\end{small}\\[-10pt]
On the other hand, $D_n$ can be also defined by
$$
\sum_{i=0}^n (-1)^i x^{(n-i)} \otimes x^i  \in  D^n U \otimes S^n U = H^0(\nP^n \times \nP^1, \sO_{\nP^n}(1) \boxtimes \sO_{\nP^1}(n)).
$$
Then we see that the map 
\begin{equation}\label{eq:D_n=multiplication}
\begin{gathered}
\xymatrixrowsep{0.18in}
\xymatrix{
H^1(\nP^n \times \nP^1,  \sO_{\nP^n}(d-1) \boxtimes \omega_{\nP^1}(-n)) \ar[r]^-{\cdot D_n} \ar@{=}[d] & H^1(\nP^n \times \nP^1,  \sO_{\nP^n}(d) \boxtimes  \omega_{\nP^1}) \ar@{=}[d]\\
 S^{d-1} D^n U \otimes D^n U  & S^d D^n U
}
\end{gathered}
\end{equation}
is given by $f \otimes x^{(i)} \mapsto (-1)^n fx^{(i)}$. We simply regard this as the multiplication map.

\subsection{Secant varieties} 
We recall the set-up of \cite{ENP1, ENP2}, and we present some preliminary results. Let $C$ be a smooth projective curve of genus $g \geq 0$, and $L$ be a  line bundle on $C$ with $\deg L \geq 2g+3$.\footnote{It is assumed that $\operatorname{char}(\kk) = 0$ in \cite{ENP1, ENP2}, but everything works when $g=0$ and $\operatorname{char}(\kk) \geq 0$ (see also \cite{RS1}).} We assume $\operatorname{char}(\kk) = 0$ whenever $g \geq 1$. We denote by $C^2=C \times C$ the ordinary product of $C$ and $C_2=C^2/\mf{S}_2$ the symmetric product of $C$. The quotient map $\sigma \colon C^2 \to C_2$ is given by $(x,y) \mapsto x+y$. If we regard $C_2$ as the Hilbert scheme of two points on $C$, then $\sigma$ is the universal family. For any line bundle $A$ on $C$, there is a line bundle $T_A$ on $C_2$ such that $\sigma^* T_A = A \boxtimes A$. Let $D$ be the diagonal of $C^2$, and $Q:=\sigma(D)$. Then $Q \cong C$ unless $g=0$ and $\operatorname{char}(\kk) = 2$. We may write $Q = 2\delta$ for some divisor $\delta$ on $C_2$. Note that $\sigma^* \delta = D$ and $\sigma_* \sO_{C^2} = \sO_{C_2}(-\delta) \oplus \sO_C$ (cf. \cite[Lemma 3.5]{ENP1}). We can write $K_{C_2} = T_{K_C}(-\delta)$. Consider the  \emph{tautological bundle} $E:=\sigma_*(\sO_C \boxtimes L)$ on $C_2$. We have $\rank E = 2$ and $\det E = T_L(-\delta)$. Let $B:=\nP(E)$, and $\pi \colon B \to C_2$ be the canonical fibration. As  $H^0(C_2, E) = H^0(C, L)$ and $E$ is globally generated, $|\sO_{\nP(E)}(1)|$ induces a map $\beta \colon B \to \nP^r=\nP H^0(C, L)$. Then $\Sigma:=\beta(B)$ is the \emph{secant variety} of $C$ in $\nP^r$, and $\beta \colon B \to \Sigma$ is  the blow-up of $\Sigma$ along $C$ (see \cite[Theorem 1.1]{ENP2}). Unless $g=0$ and $\deg L = 3$ (in this case $\Sigma = \nP^3$),  $\Sing \Sigma = C$  and $\beta \colon B \to \Sigma$ is a resolution of singularities. Let $Z:=\beta^{-1}(C) \cong C^2$. Then $\pi|_Z \colon Z \to C_2$ is just $\sigma$, and $\beta|_Z \colon Z = C \times C \to C$ is the second projection.

\begin{small}
$$
\xymatrix{
Z=C \times C \ar@{^{(}->}[r]  \ar[rd]_-{\sigma} & B=\nP(E) \ar[r]^-{\beta}  \ar[d]^-{\pi} & \Sigma \subseteq \nP^r \\
& C_2 &
}
$$
\end{small}

\begin{theorem}[{\cite[Theorems 1.1 and 1.2]{ENP1}}]\label{thm:secant}
$\Sigma \subseteq \nP^g$ is arithmetically Cohen--Macaulay. If $g=0$, then $\Sigma$ has rational singularities and $\reg \sO_{\Sigma} = 2$. If $g \geq 1$ and $\operatorname{char}(\kk) = 0$, then $\Sigma$ has normal Du Bois singularities and $\reg \sO_{\Sigma} = 4$.
\end{theorem}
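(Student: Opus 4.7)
The plan is to use the resolution $\beta \colon B \to \Sigma$ together with the $\nP^1$-bundle structure $\pi \colon B \to C_2$ to pull every cohomological question on $\Sigma$ back to the symmetric square $C_2$, where explicit computations with the tautological bundle $E = \sigma_*(\sO_C \boxtimes L)$ become tractable. I would first establish $\beta_* \sO_B = \sO_\Sigma$ (which upgrades $\Sigma$ to a normal variety) and then compute the higher direct images. Since $\beta|_Z \colon Z \cong C \times C \to C$ is the second projection with fibers $\cong C$, cohomology and base change yields
$$
R^1 \beta_* \sO_B \cong H^1(C, \sO_C) \otimes_\kk \sO_C \qquad \text{and} \qquad R^i \beta_* \sO_B = 0 \ \text{for} \ i \geq 2.
$$
For $g = 0$ all higher direct images vanish, so $\Sigma$ has rational singularities.

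To compute $H^i(\Sigma, \sO_\Sigma(m))$ I would combine the Leray spectral sequence for $\beta$ with a further pushforward via $\pi$. For $m \geq 0$, the $\nP^1$-bundle formulas give $\pi_* \sO_B(m) = S^m E$ and $R^1 \pi_* \sO_B(m) = 0$, so the abutment reduces to $H^j(C_2, S^m E)$; for $m \leq -1$ a parallel analysis using $\omega_{B/C_2} = \sO_B(-2) \otimes \pi^*(\det E)$ handles the dual side via Serre duality. The error terms coming from $R^1 \beta_* \sO_B$ equal $H^1(C, \sO_C) \otimes H^p(C, L^m)$ and are controlled by the positivity of $L$. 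Combining these reductions with cohomology vanishings for $S^m E$ on $C_2$ (which follow from global generation of $E$ together with a standard analysis of the quotient $C \times C \to C_2$) yields the ACM statement $H^i(\Sigma, \sO_\Sigma(m)) = 0$ for $0 < i < 3$ and all $m \in \nZ$.

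For the precise regularity values, I would inspect the critical boundary groups. In genus zero, $C_2 = \nP^2$ and $E$ is an explicit rank two bundle, so the required vanishings $H^1(\Sigma, \sO_\Sigma(1)) = H^2(\Sigma, \sO_\Sigma) = H^3(\Sigma, \sO_\Sigma(-1)) = 0$ become direct cohomology computations on $\nP^2$, and a Serre dual non-vanishing $H^2(\Sigma, \sO_\Sigma(-1)) \neq 0$ pins down $\reg \sO_\Sigma = 2$. In higher genus, the contribution $H^1(C, \sO_C) \otimes H^0(C, L^m)$ in the Leray sequence produces nonzero $H^1(\Sigma, \sO_\Sigma(m))$ out to $m = 2$, precisely shifting the regularity from $2$ to $4$.

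The hard part will be verifying the Du Bois property when $g \geq 1$, since $\beta$ is no longer a rational resolution. My plan is to invoke Kov\'acs' splitting criterion and construct a splitting of the canonical map $\sO_\Sigma \to R\beta_* \sO_B$ in $D^b(\Sigma)$. The natural candidate comes from the smoothness of $Z \cong C \times C$ and the fact that $\beta|_Z$ is itself a smooth morphism from a smooth variety: combining the triangle attached to $0 \to \sO_B(-Z) \to \sO_B \to \sO_Z \to 0$ with the explicit identification of $R^1 \beta_* \sO_B$ above should let one factor the identity of $\sO_\Sigma$ through $R\beta_* \sO_B$. Together with the vanishing $R^i \beta_* \sO_B = 0$ for $i \geq 2$, this would confirm the Du Bois condition and complete the proof.
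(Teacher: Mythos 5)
This statement is not proved in the paper at all: it is quoted verbatim from \cite[Theorems 1.1 and 1.2]{ENP1}, so there is no internal proof to compare your proposal against. Your sketch does follow the general architecture of Ein--Niu--Park's argument (the resolution $\beta \colon B = \nP(E) \to \Sigma$, pushforward to $C_2$ via $\pi$, and explicit computations with the tautological bundle $E$), but as written it contains two concrete problems.

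First, your regularity analysis contradicts the ACM statement you are simultaneously proving. If $\Sigma \subseteq \nP^r$ is arithmetically Cohen--Macaulay, then $H^1(\Sigma, \sO_{\Sigma}(m)) = H^2(\Sigma, \sO_{\Sigma}(m)) = 0$ for \emph{every} $m \in \nZ$; hence the claim that the Leray contribution ``produces nonzero $H^1(\Sigma, \sO_{\Sigma}(m))$ out to $m=2$'' when $g \geq 1$, and the claim that $H^2(\Sigma, \sO_{\Sigma}(-1)) \neq 0$ when $g=0$, cannot hold alongside ACM. The failure of lower regularity must be detected in $H^3$, i.e.\ by Serre duality on the Cohen--Macaulay threefold $\Sigma$ in $H^0(\Sigma, \omega_{\Sigma}(j))$ for the appropriate twist $j$; this is where the identification $\omega_{\Sigma} = \beta_* \omega_B(Z)$ (used elsewhere in the paper) enters, not the $R^1\beta_*$ term.

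Second, the identifications $\beta_* \sO_B = \sO_{\Sigma}$, $R^1 \beta_* \sO_B \cong H^1(C, \sO_C) \otimes \sO_C$, and above all $\beta_* \sO_B(-Z) = \sI_{C|\Sigma}$ with $R^1 \beta_* \sO_B(-Z) = 0$, are not consequences of ``cohomology and base change'': $\beta$ is not flat and contracts the divisor $Z$ onto the curve $C$. One needs the formal function theorem along the fibers $F = \beta^{-1}(x) \cong C$ together with the conormal bundle computation $N_{F|B}^{\vee} = \sO_C \oplus L(-2x)$ --- exactly the argument the paper reproduces in Lemma~\ref{lem:R1O_B(-S-Z)} for $\sO_B(-\widetilde{S}-Z)$, and the content of \cite[Theorem 5.2]{ENP1}. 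This same input is what your Du Bois step secretly requires. The standard route is not Kov\'acs' splitting criterion applied to $\sO_{\Sigma} \to R\beta_* \sO_B$ --- a splitting that is not evident once $R^1\beta_*\sO_B \neq 0$ --- but the exact triangle computing $\underline{\Omega}^0_{\Sigma}$ from $R\beta_*\sO_B$, $\sO_C$ and $R\beta_*\sO_Z$, which reduces the Du Bois property precisely to $R\beta_* \sO_B(-Z) \simeq \sI_{C|\Sigma}$. You gesture at the right short exact sequence $0 \to \sO_B(-Z) \to \sO_B \to \sO_Z \to 0$, but you should identify this vanishing as the crux rather than treat it as formal.
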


Pick $H \in |\sO_{\nP(E)}(1)|$. We may write $K_B = -2H + \pi^* T_{K_C + L}(-2\delta)$ and $Z=2H - \pi^* T_L(-2\delta)$. Take $\overline{S} \in |2\delta|$.  Let $\widetilde{S}:=\pi^{-1}(\overline{S}) = \nP(E|_{\overline{S}})$, and $S:=\beta(\widetilde{S})$. We are mostly interested in the case $\overline{S}=Q$. In fact, $Q$ is a unique member in $|2\delta|$ when $g \geq 2$. Note that $\dim |2\delta| = 5$ when $g=0$ and $\dim |2\delta| =1$ when $g=1$. Assume that $\operatorname{char}(\kk) \neq 2$ when $g=0$. Note that $M_E|_Q = N_{C|\nP^r}^{\vee} \otimes L$ and $E|_Q = \mathscr{P}^1(L)$ is the first jet bundle. By \cite[Corollary 1.8]{Kaji}, $\mathscr{P}^1(L)$ is the unique nontrivial extension of $L$ by $\omega_C \otimes L$ when $\operatorname{char}(\kk) \nmid \deg L$. Let $\widetilde{T}:=\pi^{-1}(Q)$. Then $T:=\beta(\widetilde{T})$ is the \emph{tangent developable surface} of $C$ in $\nP^r$, and $\Sing T = C$.
Note that $\nu:=\beta|_{\widetilde{T}} \colon \widetilde{T} \to T$ is a resolution of singularities. 
 
 \begin{proposition}
$\deg S =2 \deg L + 2g-2$.
\end{proposition}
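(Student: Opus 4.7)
The plan is to compute $\deg S$ by intersection theory on $B = \nP(E)$, reducing first to Chern-class identities on the symmetric product $C_2$ and then, via the double cover $\sigma \colon C^2 \to C_2$, to degree calculations on $C \times C$. Since $\beta|_{\widetilde{S}} \colon \widetilde{S} \to S$ is birational onto its image---for the main case $\overline{S}=Q$ this is the statement already recorded that $\nu = \beta|_{\widetilde{T}}$ is a resolution of singularities, and in the remaining cases (which only arise when $g \leq 1$) a dimension count shows that $\beta|_{\widetilde{S}}$ cannot contract a positive-dimensional family of ruling fibers---the degree of $S$ as a surface in $\nP^r$ equals the self-intersection $(H|_{\widetilde{S}})^2 = H^2 \cdot \widetilde{S}$, computed on $B$.

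Since $\widetilde{S} = \pi^{-1}(\overline{S}) \sim \pi^*(2\delta)$, the projection formula gives $\deg S = H^2 \cdot \pi^*(2\delta) = 2\,\pi_*(H^2) \cdot \delta$. On the $\nP^1$-bundle $\pi \colon \nP(E) \to C_2$, the Grothendieck relation $H^2 = H \cdot \pi^* c_1(E) - \pi^* c_2(E)$ together with $\pi_*(H) = 1$ and $\pi_*(1)=0$ yields $\pi_*(H^2) = c_1(E)$. Using $\det E = T_L(-\delta)$, this reduces to
$$
\deg S \;=\; 2\bigl(T_L \cdot \delta - \delta^2\bigr).
$$

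The two intersection numbers on $C_2$ are computed by pulling back along $\sigma$, which has degree two. From $\sigma^* T_L = L \boxtimes L$ and $\sigma^* \delta = D$,
$$
2(T_L \cdot \delta) \;=\; (L \boxtimes L) \cdot D \;=\; 2 \deg L, \qquad 2\delta^2 \;=\; D^2 \;=\; \deg N_{D/C^2} \;=\; \deg T_C \;=\; 2-2g,
$$
so $T_L \cdot \delta = \deg L$ and $\delta^2 = 1-g$. Combining,
$$
\deg S \;=\; 2\bigl(\deg L - (1-g)\bigr) \;=\; 2 \deg L + 2g - 2,
$$
as claimed. The only step that requires genuine thought is the birationality of $\beta|_{\widetilde{S}}$; once that is in hand the rest is a mechanical Chern-class computation, so I do not anticipate any real obstacle.
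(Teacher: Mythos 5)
Your argument is correct and is essentially the paper's proof: both reduce to $\deg S = (H|_{\widetilde{S}})^2 = (\det E)\cdot\overline{S}$ and evaluate this via $\det E = T_L(-\delta)$, $\overline{S}\in|2\delta|$, and the pullback along the degree-two cover $\sigma$. You simply make explicit the Grothendieck relation and the birationality of $\beta|_{\widetilde{S}}$ (which is automatic since $\beta$ is an isomorphism off $Z$ and $\widetilde{S}\not\subseteq Z$), steps the paper leaves implicit.
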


\begin{proof}
We have $\deg S = (H|_{\widetilde{S}})^2 = (\det E) \cdot \overline{S} = 2\deg L + 2g-2$.
\end{proof}

Recall from \cite[Theorem 5.2]{ENP1} that $\beta_*\sO_B(-Z) = \sI_{C|\Sigma}$ and $R^1 \beta_*\sO_B(-Z) = 0$.

\begin{lemma}\label{lem:R1O_B(-S-Z)}
$\beta_* \sO_B(-\widetilde{S}-Z) = \sI_{S|\Sigma}$ and $R^1 \beta_* \sO_B(-\widetilde{S}-Z) = 0$. 
\end{lemma}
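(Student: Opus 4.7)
My plan is to bootstrap from the recalled identities $\beta_*\sO_B(-Z)=\sI_{C|\Sigma}$ and $R^1\beta_*\sO_B(-Z)=0$ by pushing forward a short exact sequence on $B$. Since $\widetilde{S}=\pi^{-1}(\overline{S})$ is Cartier (as $\overline{S}$ is a Cartier divisor on the smooth surface $C_2$), tensoring the structure sequence $0\to\sO_B(-\widetilde{S})\to\sO_B\to\sO_{\widetilde{S}}\to 0$ with the line bundle $\sO_B(-Z)$ gives
$$
0 \longrightarrow \sO_B(-\widetilde{S}-Z) \longrightarrow \sO_B(-Z) \longrightarrow \sO_{\widetilde{S}}(-Z|_{\widetilde{S}}) \longrightarrow 0.
$$
Applying $R\beta_*$ and inserting the known identifications produces the four-term exact sequence
$$
0 \longrightarrow \beta_*\sO_B(-\widetilde{S}-Z) \longrightarrow \sI_{C|\Sigma} \xrightarrow{\ \varphi\ } \beta_*\sO_{\widetilde{S}}(-Z|_{\widetilde{S}}) \longrightarrow R^1\beta_*\sO_B(-\widetilde{S}-Z) \longrightarrow 0.
$$

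I would then match this with the natural ideal sequence
$$
0 \longrightarrow \sI_{S|\Sigma} \longrightarrow \sI_{C|\Sigma} \longrightarrow \sI_{C|S} \longrightarrow 0,
$$
which makes sense because $C\subseteq S\subseteq\Sigma$ (every member of $|2\delta|$ contains the center, just as the tangent developable does). Thus it suffices to check the identification $\beta_*\sO_{\widetilde{S}}(-Z|_{\widetilde{S}})\cong\sI_{C|S}$ with $\varphi$ being the natural surjection, together with the vanishing $R^1\beta_*\sO_B(-\widetilde{S}-Z)=0$. Both statements concern the restricted map $\mu:=\beta|_{\widetilde{S}}\colon\widetilde{S}\to S$, which is a dimension-lower analogue of $\beta$ itself: $\widetilde{S}=\nP(E|_{\overline{S}})$ is a $\nP^1$-bundle over $\overline{S}$ via $\pi|_{\widetilde{S}}$, and $\mu$ sends $Z|_{\widetilde{S}}$ onto $C$. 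One can either mimic the proof of \cite[Theorem~5.2]{ENP1} on this restricted map, or, more directly, use the clean identification
$$
\sO_B(-\widetilde{S}-Z) \;\cong\; \sO_B(-2H)\otimes\pi^*\bigl(T_L(-\overline{S}-2\delta)\bigr),
$$
which follows from $\widetilde{S}\sim\pi^*\overline{S}$ and $Z\sim 2H-\pi^*T_L(-2\delta)$, combined with the standard $\nP^1$-bundle calculation $\pi_*\sO_B(-2H)=0$ and $R^1\pi_*\sO_B(-2H)=(\det E)^{-1}=T_L^{-1}(\delta)$.

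The hard part will be surjectivity of $\varphi$, equivalently the vanishing $R^1\beta_*\sO_B(-\widetilde{S}-Z)=0$. This is a local statement along the singular locus $C\subseteq\Sigma$, and because $S$ itself passes through $C$ one cannot deduce it purely from the smooth-locus picture: the fibers of $\beta|_Z$ are positive-dimensional, and one must rule out contributions from them. In practice I expect to reduce the vanishing to the above Leray computation for $\pi$, exploiting that the base $C_2$ is smooth, the relative dualizing sheaf of $\pi$ is explicit, and the relevant line bundle on $C_2$ is sufficiently positive to make the residual cohomology vanish.
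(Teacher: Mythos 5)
Your reduction of the first assertion is workable in outline but leaves the real content unverified, and your plan for the second assertion rests on a reduction that does not exist. On the first point: the four-term sequence you write down is fine, but the identification $\beta_*\sO_{\widetilde{S}}(-Z|_{\widetilde{S}})\cong\sI_{C|S}$ is not automatic, because $\mu_*\sO_{\widetilde{S}}\neq\sO_S$ (the sequence \eqref{eq:sesOSgeneral2} exhibits $\beta_*\sO_{\widetilde{S}}/\sO_S\cong\omega_C$), so $\ker\big(\mu_*\sO_{\widetilde{S}}\to\mu_*\sO_{Z\cap\widetilde{S}}\big)$ need not coincide with $\ker(\sO_S\to\sO_C)$ without a further argument; note also that $S$ can be non-reduced (the K3 carpet case), so ``$C\subseteq S$'' must be handled scheme-theoretically. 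The paper avoids all of this by twisting the other structure sequence: it observes $\beta_*\sO_Z(-\widetilde{S})=0$ because $\sO_Z(-\widetilde{S})$ restricts to the degree $-2$ line bundle $\sO_C(-2x)$ on each fiber of the contraction $\beta|_Z=\mathrm{pr}_2$, whence $\beta_*\sO_B(-\widetilde{S}-Z)=\beta_*\sI_{\widetilde{S}|B}=\sI_{S|\Sigma}$ follows directly from $\beta_*\sO_B=\sO_{\Sigma}$.

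The serious gap is the vanishing $R^1\beta_*\sO_B(-\widetilde{S}-Z)=0$. You propose to reduce it to the Leray computation for $\pi$, but $R^1\beta_*$ and $R^1\pi_*$ see different fibrations: the fiber $F=\beta^{-1}(x)\cong C$ over a point $x\in C\subseteq\Sigma$ is not contained in a fiber of $\pi$ --- it maps isomorphically onto the curve $x+C\subseteq C_2$ --- and $\beta^{-1}(U)$ for an affine open $U\subseteq\Sigma$ is not a union of $\pi$-fibers, so knowing $\pi_*\sO_B(-2H)=0$ and $R^1\pi_*\sO_B(-2H)=(\det E)^{-1}$ gives no handle on $H^1(\beta^{-1}(U),\cdot)$. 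The paper's proof (following \cite[Theorem 5.2]{ENP1}) instead works along the fibers of $\beta$ via the formal function theorem: it suffices to show $H^1(F,\sO_B(-\widetilde{S}-Z)\otimes S^mN_{F|B}^{\vee})=0$ for all $m\geq 0$, and since $\sO_B(-\widetilde{S}-Z)|_F=L(-4x)$ and $N_{F|B}^{\vee}=\sO_C\oplus L(-2x)$, this reduces to $H^1\big(C,(m+1)L+(-4-2m)x\big)=0$, which holds because the degree is at least $(m+1)(2g+3)-4-2m\geq 2g-1$. You would need to supply an argument of this kind; the Leray sequence for $\pi$ will not produce it.
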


\begin{proof}
As $\beta_* \sO_Z(-\widetilde{S}) = 0$, we have $\beta_* \sO_B(-\widetilde{S}-Z) = \beta_* \sO_B(-\widetilde{S}) = \sI_{S|\Sigma}$. For the second assertion, following \cite[Proof of Theorem 5.2 (2)]{ENP1}, we show that $R^1 \beta_* \sO_B(-\widetilde{S}-Z)_x = 0$ for any  $x \in C \subseteq \Sigma$. Let $F:=\beta^{-1}(x) \cong C$. By the formal function theorem, it suffices to prove that
$$
H^1(F, \sO_B(-\widetilde{S}-Z) \otimes \sO_B/\sI_{F|B}^{m}) = 0~~\text{ for $m \geq 1$}.
$$
It is enough to check that
$$
H^1(F, \sO_B(-\widetilde{S}-Z) \otimes \sI_{F|B}^{m}/\sI_{F|B}^{m+1}) = 0~~\text{ for $m \geq 0$}.
$$
As $(-\widetilde{S}-Z)|_Z = (L \boxtimes -L)(-4D)$ and $F$ is a fiber of the second projection $Z=C \times C \to C$, we have $\sO_B(-\widetilde{S}-Z)|_F = L(-4x)$. Note that $\sI_{F|B}^{m}/\sI_{F|B}^{m+1} = S^{m} N_{F|B}^{\vee}$. Recall from \cite[Proposition 3.13]{ENP1} that $N_{F|B}^{\vee} = \sO_C \oplus L(-2x)$. The problem is then reduced to verifying that
$$
H^1\big(C, (m+1)L +(-4-2m)x \big) = 0~~\text{ for $m \geq 0$}.
$$
This vanishing holds since $\deg \big( (m+1)L +(-4-2m)x \big) \geq (m+1)(2g+3) - 4-2m \geq 2g-1$. 
\end{proof}

Consider the commutative diagram with short exact sequences
\begin{small}
\begin{equation}\label{eq:commdiagonB}
\begin{gathered}
\xymatrixrowsep{0.25in}
\xymatrixcolsep{0.25in}
\xymatrix{
& & 0 \ar[d] & 0 \ar[d] & \\
0 \ar[r] & \sO_B(-\widetilde{S}-Z) \ar[r]^-{\cdot Z} \ar@{=}[d] & \sO_B(-\widetilde{S}) \ar[r] \ar[d]^-{\cdot  \widetilde{S}} & \sO_{Z}(-2D) \ar[r] \ar[d] & 0 \\
0 \ar[r] & \sO_B(-\widetilde{S}-Z) \ar[r]_-{\cdot \widetilde{S} + Z} & \sO_B \ar[r] \ar[d] & \sO_{\widetilde{S}+Z} \ar[r] \ar[d] & 0\\
& & \sO_{\widetilde{S}} \ar@{=}[r] \ar[d]  & \sO_{\widetilde{S}} \ar[d] \\
&  & 0 & ~0. &
}
\end{gathered}
\end{equation}
\end{small}\\[-5pt]
By Lemma \ref{lem:R1O_B(-S-Z)}, applying $\beta_*$ to the second row exact sequence in (\ref{eq:commdiagonB}), we find $\beta_* \sO_{\widetilde{S} + Z}  = \sO_S$ and $R^1 \beta_*  \sO_{\widetilde{S} + Z}  = R^1 \beta_* \sO_B  = H^1(C, \sO_C) \otimes \sO_C$, and we get a short exact sequence
\begin{equation}\label{eq:sesOSgeneral1}
\xymatrix{
0 \ar[r] & \beta_* \sO_B(-\widetilde{S}-Z) \ar[r] &  \sO_{\Sigma} \ar[r] &\sO_S \ar[r] & 0.
}
\end{equation}
Note that $R^1 \beta_*  \sO_{\widetilde{S} + Z}  = R^1 \beta_* \sO_B   = R^1 (\beta|_Z)_* \sO_Z =R^1 (\beta|_Z)_* \sO_Z(-D)$. The kernel of the map $R^1 (\beta|_Z)_* \sO_Z(-2D) \to R^1 (\beta|_Z)_* \sO_Z(-D)$ is $\sO_D(-D|_D) = \omega_C$. By applying $\beta_*$ to the right-most vertical exact sequence in (\ref{eq:commdiagonB}), we get a short exact sequence
\begin{equation}\label{eq:sesOSgeneral2}
\xymatrix{
0 \ar[r] & \sO_S \ar[r] & \beta_* \sO_{\widetilde{S}} \ar[r] & \omega_C \ar[r] & 0.
}
\end{equation}

\begin{proposition}\label{prop:omegaS}
The dualizing sheaf $\omega_S$ is trivial.
\end{proposition}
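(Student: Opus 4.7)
My strategy is to compute $\omega_{\widetilde{S}+Z}$ by adjunction on the smooth threefold $B$, and then to transfer this information to $\omega_S$ via Grothendieck duality for the proper morphism $f := \beta|_{\widetilde{S}+Z} \colon \widetilde{S}+Z \to S$. The key inputs from the preceding discussion are $f_*\sO_{\widetilde{S}+Z} = \sO_S$ (from Lemma \ref{lem:R1O_B(-S-Z)} and (\ref{eq:sesOSgeneral1})) and $R^1 f_*\sO_{\widetilde{S}+Z} = H^1(C,\sO_C) \otimes \sO_C$.

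Using the formulas for $K_B$, $\sO_B(\widetilde{S})$, and $\sO_B(Z)$ together with the additivity $T_{K_C+L} - T_L = T_{K_C}$, a direct computation shows that the $H$- and $2\delta$-contributions cancel, giving
$$\omega_B(\widetilde{S}+Z) = \pi^*(T_{K_C}(2\delta)),$$
a pullback from $C_2$. Hence $\omega_{\widetilde{S}+Z} = \pi^*(T_{K_C}(2\delta))|_{\widetilde{S}+Z}$ by adjunction. Tensoring the sequence $0 \to \sO_B(-\widetilde{S}-Z) \to \sO_B \to \sO_{\widetilde{S}+Z} \to 0$ with $\pi^*(T_{K_C}(2\delta))$ and applying $\beta_*$---with Grauert--Riemenschneider giving $R^1\beta_*\omega_B = 0$ and a fiberwise check on the exceptional fibers $F_c \cong C$ (where $\pi^*(T_{K_C}(2\delta))|_{F_c} = K_C(2c)$ has degree $2g$, hence vanishing $H^1$) giving $R^1\beta_*\pi^*(T_{K_C}(2\delta)) = 0$---yields
$$0 \to \beta_*\omega_B \to \beta_*\pi^*(T_{K_C}(2\delta)) \to f_*\omega_{\widetilde{S}+Z} \to 0.$$

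Next, Grothendieck duality $Rf_*\omega_{\widetilde{S}+Z} \cong R\sHom_{\sO_S}(Rf_*\sO_{\widetilde{S}+Z}, \omega_S)$, combined with the local duality $\sExt^1_{\sO_S}(\sO_C, \omega_S) = \omega_C$ along the codimension-one inclusion $C \subset S$ and Serre duality $H^1(C,\sO_C)^\vee = H^0(C,\omega_C)$, yields
$$0 \to H^0(C,\omega_C) \otimes \omega_C \to f_*\omega_{\widetilde{S}+Z} \to \omega_S \to 0.$$
Comparing these two short exact sequences and using the key triviality $T_{K_C}(2\delta)|_Q = \sO_Q$---which follows from $T_{K_C}|_Q = 2K_C$ (via $\sigma^*T_{K_C} = K_C \boxtimes K_C$ restricted to $D$) and $\delta|_Q = -K_C$ (via the adjunction identity $D|_D = -K_C$ on the diagonal $D \subset C^2$)---allows us to identify $\omega_S$ with $\sO_S$.

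The main obstacle will be the final identification when $g \geq 1$: here $\Sigma$ has only Du Bois (not necessarily rational) singularities, so $\beta_*\omega_B$ need not equal $\omega_\Sigma$, and one must carefully match the $H^0(C,\omega_C) \otimes \omega_C$-contribution from Grothendieck duality against the corresponding piece of $\beta_*\pi^*(T_{K_C}(2\delta)) / \beta_*\omega_B$. A local analysis along $C \subset S$ using the explicit formulas on the resolution $B$ should finish the argument.
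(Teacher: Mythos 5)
Your opening computation is correct and is in fact the engine of the paper's own proof: $K_B+\widetilde{S}+Z=\pi^*(T_{K_C}(2\delta))$, and $T_{K_C}(2\delta)|_{Q}=\sO_Q$, equivalently $\omega_{\widetilde{S}}(Z|_{\widetilde{S}})=\sO_{\widetilde{S}}$. But as written the proposal is not a proof, and the gap is exactly the one you flag in your last paragraph. Your first exact sequence filters $\omega_B(\widetilde{S}+Z)$ by $\omega_B$, producing the two sheaves $\beta_*\omega_B$ and $\beta_*\pi^*(T_{K_C}(2\delta))$, neither of which you identify (and, as you note, $\beta_*\omega_B\neq\omega_\Sigma$ in general for $g\geq 1$, since $\Sigma$ is only Du Bois). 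Your second exact sequence comes from Grothendieck duality for $f$, which besides carrying its own unverified hypotheses (that $S$ is Cohen--Macaulay so that $\omega_S^{\bullet}=\omega_S[2]$, that the triangle for $Rf_*\sO_{\widetilde{S}+Z}$ degenerates as claimed, and that the codimension-one local duality along $C\subset S$ applies) produces an a priori unrelated presentation of $f_*\omega_{\widetilde{S}+Z}$. Matching the two presentations is precisely the content of the statement for $g\geq 1$, and ``a local analysis along $C\subset S$ should finish the argument'' is where the proof has to happen, not a remark after it.

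The paper avoids both problems by twisting by $Z$ before pushing forward: it uses $0\to\omega_B(Z)\to\omega_B(\widetilde{S}+Z)\to\sO_{\widetilde{S}}\to 0$, whose cokernel is $\sO_{\widetilde{S}}$ by your triviality observation. The subsheaf now pushes forward to $\omega_\Sigma$ --- for $g\geq 1$ this is exactly \cite{KSS} applied to the Du Bois singularities of $\Sigma$ from Theorem \ref{thm:secant}, which is how one sidesteps the failure of $\beta_*\omega_B=\omega_\Sigma$ --- while $R^1\beta_*\omega_B(Z)=\omega_C$ together with (\ref{eq:sesOSgeneral2}) identifies the image of $\beta_*\omega_B(\widetilde{S}+Z)$ in $\beta_*\sO_{\widetilde{S}}$ with $\sO_S$, yielding $0\to\omega_\Sigma\to\beta_*\omega_B(\widetilde{S}+Z)\to\sO_S\to 0$. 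Comparing this with the adjunction sequence for the divisor $S$ on the Cohen--Macaulay variety $\Sigma$ gives $\omega_S=\sO_S$ with no recourse to duality for $f$. If you want to salvage your route, replace $\omega_B$ by $\omega_B(Z)$ in your first sequence and invoke \cite{KSS}; the Grothendieck--duality sequence then becomes unnecessary.
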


\begin{proof}
Consider two short exact sequences
\begin{small}
\begin{equation}\label{eq:twoses}
\xymatrixcolsep{0.28in}
\xymatrix{
0 \ar[r] & \omega_B \ar[r] & \omega_B(Z)\ar[r]  & \omega_Z \ar[r] & 0~\text{ }~\text{ }~\text{ and }~\text{ }~~\text{ } 0 \ar[r] & \omega_B \ar[r] & \omega_B(\widetilde{S}) \ar[r] & \omega_{\widetilde{S}} \ar[r] & 0.
}
\end{equation}
\end{small}\\[-8pt]
By Theorem \ref{thm:secant} for $g=0$ and Grauert--Riemenschneider vanishing for $g \geq 1$, we have $R^1 \beta_* \omega_B = 0$. Then  $R^1 \beta_* \omega_B(Z) =R^1 \beta_* \omega_Z =  \omega_C$ and $R^1 \beta_* \omega_B(\widetilde{S}) = 0$. By taking $-\otimes \sO_B(\widetilde{S})$ to the left of (\ref{eq:twoses}), we see that $R^1 \beta_* \omega_B(\widetilde{S}+Z) = 0$. When $g=0$, we have $\beta_* \omega_B(Z) = \beta_* \omega_B = \omega_{\Sigma}$ by Theorem \ref{thm:secant}. When $g \geq 1$, Theorem \ref{thm:secant} and \cite[Theorem 1.1]{KSS} show that $\beta_* \omega_B(Z) = \omega_{\Sigma}$. As $\omega_{\widetilde{S}}(Z|_{\widetilde{S}}) = \sO_{\widetilde{S}}$, applying $\beta_*$ to the short exact sequence with the consideration of (\ref{eq:sesOSgeneral2}),
$$
\xymatrix{
0 \ar[r] & \omega_B(Z) \ar[r]^-{\cdot \widetilde{S}} & \omega_B(\widetilde{S}+Z) \ar[r] & \sO_{\widetilde{S}} \ar[r] & 0,
}
$$
we obtain a short exact sequence
$$
\xymatrix{
0 \ar[r] & \omega_{\Sigma} \ar[r] & \omega_{\Sigma}(S) \ar[r] & \sO_S \ar[r] & 0 .
}
$$
Since $\Sigma$ is Cohen--Macaulay, we conclude that $\omega_S = \sO_S$.
\end{proof}

When $g=0$ and $S=T$ is the tangent developable surface, the above proposition was shown in \cite[Corollary 5.16]{AFPRW1} 

\subsection{Rational curve case} 
Assume that $\operatorname{char}(\kk) \neq 2$. Let $C \subseteq \nP^g$ be a rational normal curve of degree $g \geq 3$, and $L:=\sO_{\nP^1}(g)$. When we consider a rational normal curve, $g$ is not the genus  but the degree. Note that $C_2 = \nP^2$ and $\sO_{\nP^2}(\delta) = \sO_{\nP^2}(1)$. We have $T_{\sO_{\nP^1}(d)} =  \sO_{\nP^2}(d)$, and put $N_{\sO_{\nP^1}(d)}:=T_{\sO_{\nP^1}(d)}(-\delta) = \sO_{\nP^2}(d-1)$. Then $\sigma^* T_{\sO_{\nP^1}(d)} = \sO_{\nP^1}(d) \boxtimes \sO_{\nP^1}(d)$ and $\sigma^* N_{\sO_{\nP^1}(d)} = \sO_{\nP^1}(d-1) \boxtimes \sO_{\nP^1}(d-1)$. Note that $\pi_* \sO_B(-Z) = 0$ and $R^1 \pi_* \sO_B(-Z) = \sO_{\nP^2}(-\delta)$. By applying $\pi_*$ to the short exact sequence
\begin{equation}\label{eq:sesZonB}
\xymatrix{
0 \ar[r] & \sO_B(-Z) \ar[r]^-{\cdot Z}  & \sO_B \ar[r] & \sO_Z \ar[r] & 0,
}
\end{equation}
we get a splitting short exact sequence
$$
\xymatrix{
0 \ar[r] & \sO_{\nP^2} \ar[r] & \sigma_* \sO_Z \ar[r] & \sO_{\nP^2}(-\delta) \ar[r] & 0.
}
$$
Then we obtain a canonically splitting short exact sequence
\begin{small}
\begin{equation}\label{eq:sesZgivessplitting}
\begin{gathered}
\xymatrixrowsep{0.18in}
\xymatrix{
0 \ar[r] & H^0(\nP^2, T_{\sO_{\nP^1}(d) }) \ar[r] \ar@{=}[d] & H^0(\nP^1 \times \nP^1,  \sO_{\nP^1}(d) \boxtimes \sO_{\nP^1}(d)) \ar[r] \ar@{=}[d] & H^0(\nP^2, N_{\sO_{\nP^1}(d) }) \ar[r] \ar@{=}[d] & 0\\
& D^2 S^d U & S^d U \otimes S^d U & \wedge^2 S^d U. &
}
\end{gathered}
\end{equation}
\end{small}

\begin{lemma}\label{lem:M_E}
$M_E = S^{g-2} U \otimes \sO_{\nP^2}(-1)$.
\end{lemma}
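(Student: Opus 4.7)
The plan is to exhibit an explicit isomorphism $S^{g-2} U \otimes \sO_{\nP^2}(-1) \xrightarrow{\sim} M_E$ arising from polynomial multiplication on $\nP^1$: construct a natural map via the tautological inclusion composed with the multiplication $S^{g-2}U \otimes S^2 U \to S^g U$, verify it factors through $M_E$, and then check fiberwise that it is an isomorphism of vector bundles of the same rank.

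First I would identify $\nP^2 = C_2$ with the classical projective space $\nP(S^2 U)$ whose points are the lines $\kk \cdot f$ in $V := S^2 U$, parametrizing effective divisors of degree $2$ on $\nP^1$. This is compatible with $H^0(\nP^2, \sO_{\nP^2}(1)) = D^2 U$ and $\sO_{\nP^2}(\delta) = \sO_{\nP^2}(1)$ from Subsection~2.4. On this $\nP(V)$ one has the tautological inclusion $\sO_{\nP^2}(-1) \hookrightarrow V \otimes \sO_{\nP^2}$, whose fiber at $[f]$ is the line $\kk \cdot f$. Composing with the multiplication map $\mu \colon S^{g-2} U \otimes S^2 U \to S^g U$, $h \otimes f \mapsto hf$, I obtain
$$
\phi \colon S^{g-2} U \otimes \sO_{\nP^2}(-1) \hookrightarrow S^{g-2} U \otimes V \otimes \sO_{\nP^2} \xrightarrow{\mu \otimes \id} S^g U \otimes \sO_{\nP^2} = H^0(\nP^2, E) \otimes \sO_{\nP^2}.
$$

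Next I would show that $\phi$ factors through $M_E \subseteq H^0(\nP^2, E) \otimes \sO_{\nP^2}$. Since $E = \sigma_*(\sO_{\nP^1} \boxtimes L)$ and $\sigma \colon \nP^1 \times \nP^1 \to \nP^2$ is finite flat of degree $2$ whose scheme-theoretic fiber over $[f]$ maps isomorphically via the second projection onto the length-$2$ subscheme $\xi_f \subseteq \nP^1$ cut out by $f$, the evaluation map $\ev$ at $[f]$ is the restriction $S^g U = H^0(\nP^1, L) \to H^0(\xi_f, L|_{\xi_f}) = E_{[f]}$. Any section $hf$ with $h \in S^{g-2}U$ vanishes on $\xi_f$, so $\phi_{[f]}(h \otimes f) \in \ker \ev_{[f]}$. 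This produces $\tilde\phi \colon S^{g-2} U \otimes \sO_{\nP^2}(-1) \to M_E$.

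Finally I would verify that $\tilde\phi$ is an isomorphism. Both bundles have rank $g - 1$: the left-hand side because $\dim S^{g-2} U = g - 1$, and the right-hand side because $\rank M_E = h^0(\nP^2, E) - \rank E = (g+1) - 2$. At every point, $\tilde\phi_{[f]}$ is multiplication by $f$, which is injective on $S^{g-2} U$ since $\Sym U$ is a domain and $f \neq 0$. A fiberwise injective morphism of vector bundles of the same rank is an isomorphism. There is no substantial obstacle once $M_E$ is geometrically identified as the sheaf of ``multiples of the universal quadratic form''; as a sanity check, the determinants agree, since $\det E = T_L(-\delta) = \sO_{\nP^2}(g-1)$ forces $\det M_E = \sO_{\nP^2}(-(g-1)) = \det\bigl(S^{g-2} U \otimes \sO_{\nP^2}(-1)\bigr)$.
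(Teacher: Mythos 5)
Your proof is correct. The underlying map is the same one the paper uses --- multiplication by the ``universal'' degree-two form --- but your derivation is genuinely different in execution. The paper obtains the whole exact sequence $0 \to S^{g-2}U \otimes \sO_{\nP^2}(-1) \to S^g U \otimes \sO_{\nP^2} \to E \to 0$ in one stroke by twisting the ideal-sheaf sequence of the incidence divisor $D_2 \subseteq \nP^2 \times \nP^1$ by $\sO_{\nP^2} \boxtimes \sO_{\nP^1}(g)$ and pushing forward along the first projection, using $(p_1)|_{D_2} = \sigma$ to identify the quotient with $E$ and the surjection with evaluation; exactness of the pushed-forward sequence is automatic since $R^1 p_{1,*}\bigl(\sO_{\nP^2}(-1) \boxtimes \sO_{\nP^1}(g-2)\bigr)=0$. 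You instead build the subbundle map by hand from the tautological inclusion $\sO_{\nP(S^2U)}(-1) \hookrightarrow S^2U \otimes \sO$ and the multiplication $S^{g-2}U \otimes S^2U \to S^gU$, check pointwise that it lands in $\ker(\ev)$ via the identification $E_{[f]} = H^0(\xi_f, L|_{\xi_f})$, and conclude by fiberwise injectivity plus equality of ranks. Your route is more elementary and self-contained (no appeal to the $D_n$ machinery of Subsection 2.4), at the cost of the small verifications that the fiber of $\sigma$ over $[f]$ maps isomorphically to $\xi_f$ under the second projection (including the non-reduced case) and that a fiberwise-injective map of bundles of equal rank is an isomorphism; the paper's route fits into its systematic use of the divisors $D_n$ elsewhere and hands you the presentation of $E$ as a cokernel, which is the form in which the lemma is actually quoted. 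All the details you would need to supply (flatness of $\sigma$, base change for $\sigma_*$, the rank count $h^0(E) - \rank E = g-1$) do go through, so there is no gap.
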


\begin{proof}
Recall that there is an injective map $D_2 = \nP^1 \times \nP^1 \to \nP^2 \times \nP^1$. Let $p_1 \colon \nP^2 \times \nP^1 \to \nP^2$ be the first projection. Then $(p_1)|_{D_2} = \sigma$. By applying $p_{1,*}$ to the short exact sequence
$$
\xymatrix{
0 \ar[r] & \sO_{\nP^2}(-1) \boxtimes \sO_{\nP^1}(g-2) \ar[r]^-{\cdot D_2} & \sO_{\nP^2} \boxtimes \sO_{\nP^1}(g) \ar[r] & \sO_{\nP^1} \boxtimes \sO_{\nP^1}(g) \ar[r] & 0,
}
$$
we get a short exact sequence
$$
\xymatrix{
0 \ar[r] & S^{g-2} U \otimes \sO_{\nP^2}(-1) \ar[r] & S^g U \otimes \sO_{\nP^2} \ar[r] & E \ar[r] & 0,
}
$$
where $H^0(\nP^2, E) \otimes \sO_{\nP^2} = S^g U \otimes \sO_{\nP^2} \to E$ is the evaluation map. Thus the lemma follows.
\end{proof}

\begin{proposition}\label{prop:ACM}
$(1)$ $K_B+\widetilde{S}+Z = 0$.\\
$(2)$ $S \subseteq \nP^g$ is arithmetically Cohen--Macaulay and $\reg \sO_S  =3$.\\
$(3)$ The Hilbert function of $S \subseteq \nP^g$ is given by $H_S(t)=(g-1)t^2 + 2$ for $t \geq 1$.
\end{proposition}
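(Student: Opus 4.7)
The plan is to establish (1) by a direct divisor-class calculation on $B = \nP(E)$, and then leverage (1) to derive the two key identifications $\omega_B \cong \sO_B(-\widetilde{S}-Z)$ and $\omega_\Sigma \cong \sI_{S|\Sigma}$. Once the short exact sequence $0 \to \omega_\Sigma \to \sO_\Sigma \to \sO_S \to 0$ is in place, both (2) and (3) will follow from Serre duality combined with the ACM property of $\Sigma$ supplied by Theorem \ref{thm:secant}.

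For (1), the formulas $K_B = -2H + \pi^*T_{K_C+L}(-2\delta)$, $Z = 2H - \pi^*T_L(-2\delta)$, and $\widetilde{S} = \pi^*\overline{S}$ with $\overline{S} \in |2\delta|$ immediately give
$$K_B + \widetilde{S} + Z \;=\; \pi^*\bigl(T_{K_C+L}(-2\delta) - T_L(-2\delta) + 2\delta\bigr) \;=\; \pi^*\bigl(T_{K_C} + 2\delta\bigr).$$
Since $C = \nP^1$ with $K_C = \sO_{\nP^1}(-2)$, we have $T_{K_C} = \sO_{\nP^2}(-2)$ and $\sO_{\nP^2}(2\delta) = \sO_{\nP^2}(2)$, so the class above is $\pi^*\sO_{\nP^2}(0) = 0$.

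The key consequence of (1) is $\omega_B \cong \sO_B(-\widetilde{S}-Z)$. Pushing forward via Lemma \ref{lem:R1O_B(-S-Z)} gives $\beta_*\omega_B = \sI_{S|\Sigma}$ and $R^1\beta_*\omega_B = 0$; since $\Sigma$ has rational singularities by Theorem \ref{thm:secant}, also $\beta_*\omega_B = \omega_\Sigma$, so $\omega_\Sigma = \sI_{S|\Sigma}$ and we obtain the short exact sequence
$$0 \to \omega_\Sigma \to \sO_\Sigma \to \sO_S \to 0.$$
Twisting by $\sO_\Sigma(m)$ and using the ACM of $\Sigma$, both $H^i(\sO_\Sigma(m))$ and $H^i(\omega_\Sigma(m)) = H^{3-i}(\sO_\Sigma(-m))^\vee$ vanish in the range $0 < i < 3$ for every $m$. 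The long exact sequence therefore yields $H^1(\sO_S(m)) = 0$ for all $m$ together with a surjection $H^0(\sO_\Sigma(m)) \twoheadrightarrow H^0(\sO_S(m))$, which combined with the projective normality of $\Sigma$ shows $S \subseteq \nP^g$ is arithmetically Cohen--Macaulay. Since $\omega_S = \sO_S$ by Proposition \ref{prop:omegaS}, Serre duality on $S$ gives $H^2(\sO_S(m)) = H^0(\sO_S(-m))^\vee$, which is $0$ for $m \geq 1$ and $\kk$ for $m = 0$. Hence $\sO_S$ is $3$-regular but not $2$-regular, proving (2).

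For (3), since $S$ is arithmetically Cohen--Macaulay and $H^2(\sO_S(m)) = 0$ for $m \geq 1$, the Hilbert function equals $\chi(\sO_S(m))$ in that range. Riemann--Roch on the Cohen--Macaulay surface $S$ with $K_S = 0$ and $H^2 = \deg S = 2g-2$ gives $\chi(\sO_S(m)) = \chi(\sO_S) + (g-1)m^2$, and $\chi(\sO_S) = 2$ follows from $h^0(\sO_S) = h^2(\sO_S) = 1$ together with $h^1(\sO_S) = 0$ (read off from the long exact sequence at $m = 0$ using $\chi(\sO_\Sigma) = \chi(\sO_B) = \chi(\sO_{\nP^2}) = 1$). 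The only genuinely nontrivial step is (1); once that identity is secured, the rest is formal manipulation of the exact sequence it produces together with Theorem \ref{thm:secant} and Proposition \ref{prop:omegaS}.
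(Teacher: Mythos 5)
Your proof is correct and follows essentially the same route as the paper's: the identical divisor-class computation for $(1)$, the identification $\omega_\Sigma=\beta_*\omega_B=\beta_*\sO_B(-\widetilde S-Z)=\sI_{S|\Sigma}$ yielding the short exact sequence $0\to\omega_\Sigma\to\sO_\Sigma\to\sO_S\to 0$, and then Theorem \ref{thm:secant} plus duality for $(2)$ and $(3)$. The only (harmless) deviation is in $(3)$, where you kill the linear term of the Hilbert polynomial via the symmetry $\chi(\sO_S(m))=\chi(\sO_S(-m))$ forced by $\omega_S=\sO_S$, whereas the paper instead evaluates $P_S(1)=g+1$; the rest of your argument just spells out the deductions the paper compresses into ``we readily obtain.''
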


\begin{proof}
The assertion $(1)$ follows from the direct computation:
$$
K_B+ \widetilde{S} +Z = (-2H + \pi^*\sO_{\nP^2}(g-4) ) + \pi^*\sO_{\nP^2}(2) +(2H - \pi^*\sO_{\nP^2}(g-2)) = 0.
$$ 
As $\beta_*\sO_B(-\widetilde{S}-Z)=\beta_* \omega_B = \omega_{\Sigma}$ by Theorem \ref{thm:secant}, the short exact sequence (\ref{eq:sesOSgeneral1}) is
\begin{equation}\label{eq:sessecant/tangent}
\xymatrix{
0 \ar[r] & \omega_{\Sigma} \ar[r] & \sO_{\Sigma} \ar[r] & \sO_S \ar[r] & 0.
}
\end{equation}
By Theorem \ref{thm:secant}, we readily obtain $(2)$. Now, $H_T(t)$ agrees with the Hilbert polynomial $P_S(t)$ for $t \geq 1$. Note that $\det P_S(t)=2$ and the leading coefficient of $P_T(t)$ is $(\deg S)/2=g-1$. As $P_S(0)=\chi(\sO_S)=2$ and $P_S(1)=g+1$, we get $(3)$.
\end{proof}

When $S=T$ is the tangent developable surface, Proposition \ref{prop:ACM} $(2)$ was first shown by Schreyer \cite[Proposition 6.1]{Schreyer1} (see also \cite[Theorem 5.1]{AFPRW1}). Observe that $S \subseteq \nP^g$ has the same Hilbert polynomial with a K3 surface of degree $2g-2$ in $\nP^g$.

\medskip

Suppose that $\overline{S}=Q$. Here $Q=\sigma(D)$ is a smooth conic in $\nP^2$. When $\operatorname{char}(\kk) = 0$ or $\operatorname{char}(\kk) \geq (g+2)/2$, \cite[Corollary 1.8]{Kaji} implies that $E|_Q = \sO_{\nP^1}(g-1) \oplus  \sO_{\nP^1}(g-1)$. In this case, $\widetilde{T} = \nP^1 \times \nP^1$, and $\sO_{\widetilde{T}}(H|_{\widetilde{T}}) = \sO_{\nP^1}(g-1) \boxtimes \sO_{\nP^1}(1)$.  
Consider the short exact sequence
\begin{small}
\begin{equation}\label{eq:sesWahlmap}
\begin{gathered}
\xymatrixrowsep{0.18in}
\xymatrix{
0 \ar[r] & H^0(\nP^2, \sO_{\nP^2}(d-1)) \ar[r]^-{\cdot Q} \ar@{=}[d] & H^0(\nP^2, \sO_{\nP^2}(d+1)) \ar[r] \ar@{=}[d] & H^0(Q, \sO_{Q}(d+1)) \ar[r] \ar@{=}[d] & 0 \\
& \wedge^2 S^d U & \wedge^2 S^{d+2} U & S^{2d+2} U.
}
\end{gathered}
\end{equation}
\end{small}\\[-5pt]
Since $\sigma^*Q = 2D$ and the diagonal $D$ of $\nP^1 \times \nP^1$ is defined by 
$$
x \otimes 1 - 1 \otimes x \in U \otimes U =  H^0(\nP^1 \times \nP^1,  \sO_{\nP^1}(1) \boxtimes \sO_{\nP^1}(1)),
$$
the inclusion $H^0(\nP^2, \sO_{\nP^2}(d-1)) \to H^0(\nP^2, \sO_{\nP^2}(d+1))$ in (\ref{eq:sesWahlmap}) is
$$
\wedge^2 S^d U \longrightarrow \wedge^2 S^{d+2} U,~~x^i \wedge x^j \longmapsto x^{i+2} \wedge x^j - 2 x^{i+1} \wedge x^{j+1} + x^{i} \wedge x^{j+2}.
$$
The surjection $H^0(\nP^2, \sO_{\nP^2}(d+1)) \to H^0(Q, \sO_{Q}(d+1))$ in (\ref{eq:sesWahlmap}) is
\begin{equation}\label{eq:wahlmap}
\mu_{d+2} \colon \wedge^2 S^{d+2} U \longrightarrow  S^{2d+2} U,~~x^i \wedge x^j \longmapsto (i-j) x^{i+j-1},
\end{equation}
which is the \emph{Wahl map} (or the \emph{Gaussian map}). See \cite{BEL, Wahl} for details on Wahl maps.

\medskip

Suppose that $\overline{S} = 2\ell$, where $\ell \subseteq \nP^2$ is a line meeting $Q$ at two distinct points. We may write $E|_{\ell} = \sO_{\nP^1}(a) \oplus \sO_{\nP^1}(b)$ for some integers $b \geq a \geq 1$.
As $\det E = N_L = \sO_{\nP^2}(g-1)$, we have $a+b=g-1$. Note that $\sigma^{-1}(\ell) = \nP^1$ and $\sigma |_{\sigma^{-1}(\ell)} \colon \sigma^{-1}(\ell)  \to \ell$ is a 2-to-1 map. By restricting the following short exact sequence
$$
\xymatrix{
0 \ar[r] & \sO_{\nP^1}(g-1) \boxtimes \sO_{\nP^1}(-1) \ar[r] & \sigma^* E \ar[r] & \sO_{\nP^1} \boxtimes \sO_{\nP^1}(g) \ar[r] & 0
}
$$
to $\sigma^{-1}(\ell)$, we get a short exact sequence
$$
\xymatrix{
0 \ar[r] & \sO_{\nP^1}(g-2)  \ar[r] & \sO_{\nP^1}(2a) \oplus \sO_{\nP^1}(2b) \ar[r] & \sO_{\nP^1}(g) \ar[r] & 0.
}
$$
It follows that $(a,b) = (\lfloor (g-1)/2 \rfloor, \lfloor g/2 \rfloor)$. Then $S$ is a double structure on a rational normal surface scroll $S(a,b)=\nP(E|_{\ell}) \subseteq \nP^g$. Recall that $H^1(S, \sO_S)=0$ and $\omega_S = \sO_S$. Thus $S=X(\lfloor (g-1)/2 \rfloor, \lfloor g/2 \rfloor) \subseteq \nP^g$ is a \emph{K3 carpet} (see \cite[Definition 1.2 and Theorem 1.3]{GP}\footnote{The proof of \cite[Theorem 1.3]{GP} also works in positive characteristic.}). On the other hand, consider the short exact sequence
\begin{small}
\begin{equation}\label{eq:ses2ell}
\begin{gathered}
\xymatrixrowsep{0.18in}
\xymatrix{
0 \ar[r] & H^0(\nP^2, \sO_{\nP^2}(d-1)) \ar[r]^-{\cdot 2\ell} \ar@{=}[d] & H^0(\nP^2, \sO_{\nP^2}(d+1)) \ar[r] \ar@{=}[d] & H^0(2\ell, \sO_{2\ell}(d+1)) \ar[r] & 0\\
& \wedge^2 S^d U & \wedge^2 S^{d+2} U. & 
}
\end{gathered}
\end{equation}
\end{small}\\[-5pt]
We may assume that $\sigma^{-1}(\ell)$ in $\nP^1 \times \nP^1$ is defined by
$$
x \otimes 1 + 1 \otimes x \in U \otimes U = H^0(\nP^1 \times \nP^1, \sO_{\nP^1}(1) \boxtimes \sO_{\nP^1}(1)).
$$
Then the inclusion $H^0(\nP^2, \sO_{\nP^2}(d-1)) \to H^0(\nP^2, \sO_{\nP^2}(d+1))$ in (\ref{eq:ses2ell}) is
$$
\wedge^2 S^d U \longrightarrow \wedge^2 S^{d+2} U,~~x^i \wedge x^j \longmapsto x^{i+2} \wedge x^j + 2 x^{i+1} \wedge x^{j+1} + x^{i} \wedge x^{j+2}.
$$
It is a direct summand of the inclusion
$$
H^0(\nP^1 \times \nP^1, \sO_{\nP^1}(d) \boxtimes \sO_{\nP^1}(d)) \xrightarrow{~\cdot 2\sigma^{-1}(\ell)~}  H^0(\nP^1 \times \nP^1, \sO_{\nP^1}(d+2) \boxtimes \sO_{\nP^1}(d+2))
$$
whose cokernel is $S^{2d+2}U \oplus S^{2d+4}U$. The surjection $H^0(\nP^2, \sO_{\nP^2}(d+1)) \to H^0(2\ell, \sO_{2\ell}(d+1))$ in (\ref{eq:ses2ell}) factors through
\begin{small}
\begin{equation}\label{eq:tau}
\begin{gathered}
\tau_{d+2} \colon \wedge^2 S^{d+2} U \longrightarrow  \begin{array}{c} S^{2d+2} U \\[-4pt] \oplus \\S^{2d+4} U \end{array},~~x^i \wedge x^j \longmapsto \begin{cases} \big( (-1)^i(i-j)x^{i+j-1}, 0 \big) & \text{if $i \equiv j ~(\operatorname{mod} 2)$} \\ \big(0, (-1)^i x^{i+j} \big) & \text{if $i \not\equiv j~ (\operatorname{mod} 2)$ } \end{cases}
\end{gathered}
\end{equation}
\end{small}\\[-5pt]
in such a way that $\im(\tau_{d+2})$ injects into $H^0(2\ell, \sO_{2\ell}(d+1))$.

\section{Proofs of Theorems \ref{thm:AFPRW} and \ref{thm:K3carpet}}\label{sec:proof1}

Assume that $\operatorname{char}(\kk) \neq 2$. Let $C \subseteq \nP^g$ be a rational normal curve of degree $g \geq 3$, and $L:=\sO_{\nP^1}(g)$. We use the notations in Section \ref{sec:prelim}. From (\ref{eq:sessecant/tangent}), we see that
$$
K_{p,2}(S, \sO_S(1)) = \coker\left(K_{p,2}(\Sigma, K_{\Sigma}; \sO_{\Sigma}(1)) \xrightarrow{~\rho_{p+2}~} K_{p,2}(\Sigma, \sO_{\Sigma}(1))\right).
$$
Thus $K_{p,2}(S, \sO_S(1)) = 0$ if and only if $\rho_{p+2}$ is surjective. Recall from Theorem \ref{thm:secant} that $\Sigma$ has rational singularities. As $\beta^* M_{\sO_{\Sigma}(1)} = M_H$, by Proposition \ref{prop:koszulcoh}, we find
$$
\begin{array}{l}
K_{p,2}(\Sigma, K_{\Sigma}; \sO_{\Sigma}(1)) = H^2(\Sigma, \wedge^{p+2} M_{\sO_{\Sigma}(1)} \otimes \omega_{\Sigma}) = H^2(B, \wedge^{p+2} M_H \otimes \omega_B);\\[3pt]
 K_{p,2}(\Sigma, \sO_{\Sigma}(1)) = H^2(\Sigma, \wedge^{p+2} M_{\sO_{\Sigma}(1)}) = H^2(B, \wedge^{p+2} M_H).
\end{array}
$$
In (\ref{eq:commdiagonB}), we have $-\widetilde{S}-Z = K_B, ~-\widetilde{S} = K_B+Z,~-2D= K_Z$. Then $\rho_{p+2}$ fits into the following commutative diagram with exact sequences induced from (\ref{eq:commdiagonB}):
\begin{footnotesize}
\begin{equation}\label{eq:coWahl+Koszul}
\begin{gathered}
\xymatrixrowsep{0.25in}
\xymatrixcolsep{0.5in}
\xymatrix{
& H^1(\widetilde{S}, \wedge^{p+2} M_H|_{\widetilde{S}}) \ar@{^{(}->}[d]_-{\alpha_{p+2}}  \ar[rd]^-{\gamma_{p+2}}  \\
H^2(B, \wedge^{p+2} M_H \otimes \omega_B) \ar@{^{(}->}[r]   \ar[rd]_-{\rho_{p+2}} & H^2(B, \wedge^{p+2} M_H \otimes \omega_B(Z)) \ar[r]_-{\delta_{p+2}} \ar@{->>}[d] & H^2(Z, \omega_{\nP^1} \boxtimes \wedge^{p+2} M_L \otimes \omega_{\nP^1})\\
 & H^2(B, \wedge^{p+2} M_H). &
}
\end{gathered}
\end{equation}
\end{footnotesize}\\[-7pt]
It is easy to check that $\rho_{p+2}$ is surjective if and only if $\gamma_{p+2} = \delta_{p+2} \circ \alpha_{p+2}$ surjects onto $\im(\delta_{p+2})$. We shall prove the latter for $0 \leq p \leq \lfloor (g-3)/2 \rfloor$ when $S=T$ is the tangent developable surface and $S=X(\lfloor (g-1)/2 \rfloor, \lfloor g/2 \rfloor)$ is a K3 carpet. To this end, we first describe $\gamma_{p+2}=\delta_{p+2} \circ \alpha_{p+2}$. 
Let $q:=g-p-3,~V:=D^{p+2}U,~W:= H^1(\overline{S}, \sO_{\overline{S}}(-p-2))$. 

\begin{lemma}\label{lem:gamma_{p_2}}
The map $\gamma_{p+2}$ is the composition
$$
S^q V \otimes W \xrightarrow{~\alpha_{p+2}~} S^q V \otimes \wedge^2 V \xrightarrow{~\delta_{p+2}~} S^{q+1} V \otimes V
$$
such that $\delta_{p+2}$ is the Koszul differential given by $f \otimes (x^{(i)} \wedge x^{(j)}) \longmapsto f x^{(i)} \otimes x^{(j)} - f x^{(j)} \otimes x^{(i)}$ and $\alpha_{p+2}=\id_{S^q V  \otimes \Delta_{p+2}}$, where $\Delta_{p+2}$ fits into the following short exact sequence:\footnote{When $S=T$, one can easily confirm that the map $\gamma$ in the introduction coincides with the map $\gamma_{p+2}$ by replacing the range by $\im(\delta_{p+2}) = K_{p,1}(\nP^1, \omega_{\nP^1}; \sO_{\nP^1}(g))$. In this case, $\Delta_{p+2}$ can be thought as the dual of the Wahl map $\mu_{p+2}$.} 
\begin{small}
$$
\xymatrix{
0 \ar[r] & H^1(\overline{S}, \sO_{\overline{S}}(-p-2)) \ar[r]^-{\Delta_{p+2}} & H^2(\nP^2, \sO_{\nP^2}(-p-2)(-\overline{S})) \ar[r]^-{\cdot \overline{S}} & H^2(\nP^2, \sO_{\nP^2}(-p-2)) \ar[r] & 0.
}
$$
\end{small}
\end{lemma}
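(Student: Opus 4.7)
The plan is to reduce everything to $\nP^2$ via pushforward along $\pi : B \to \nP^2$, where Lemma \ref{lem:M_E} and Hermite reciprocity (\ref{eq:Hermite}) make the divided-power descriptions manifest. First I would compute $\pi_* \wedge^{p+2} M_H$. Using the tautological exact sequence on $B = \nP(E)$,
\[
0 \longrightarrow \pi^* M_E \longrightarrow M_H \longrightarrow \pi^* \det E \otimes \sO_B(-H) \longrightarrow 0,
\]
together with $R^i \pi_* \sO_B(-H) = 0$ for all $i \geq 0$, one obtains $\pi_* \wedge^{p+2} M_H = \wedge^{p+2} M_E$ and $R^{\geq 1} \pi_* \wedge^{p+2} M_H = 0$. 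Combined with Lemma \ref{lem:M_E} and (\ref{eq:Hermite}), this gives
\[
\pi_* \wedge^{p+2} M_H \;=\; \wedge^{p+2} S^{g-2} U \otimes \sO_{\nP^2}(-p-2) \;=\; S^q V \otimes \sO_{\nP^2}(-p-2).
\]

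Next I would identify the three cohomology groups in (\ref{eq:coWahl+Koszul}). From Proposition \ref{prop:ACM}(1) and $\widetilde{S} = \pi^* \sO_{\nP^2}(2)$ one gets $\omega_B(Z) = \pi^* \sO_{\nP^2}(-2)$, so the projection formula and Leray reduce the middle term to $H^2(\nP^2, S^q V \otimes \sO_{\nP^2}(-p-4))$; Serre duality on $\nP^2$ combined with another application of Hermite reciprocity identifies this with $S^q V \otimes \wedge^2 V$. For the left term, base change along $\widetilde{S} = \pi^{-1}(\overline{S})$ yields $H^1(\widetilde{S}, \wedge^{p+2} M_H|_{\widetilde{S}}) = S^q V \otimes H^1(\overline{S}, \sO_{\overline{S}}(-p-2)) = S^q V \otimes W$. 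For the right term, since $\beta|_Z$ is the second projection $\nP^1 \times \nP^1 \to \nP^1$, one has $M_H|_Z = \sO_{\nP^1} \boxtimes M_L$, and the K\"unneth formula with $\wedge^{p+2} M_L = S^{q+1} V \otimes \sO_{\nP^1}(-p-2)$ gives $H^2(Z, \omega_{\nP^1} \boxtimes \wedge^{p+2} M_L \otimes \omega_{\nP^1}) = S^{q+1} V \otimes V$.

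With these identifications, the description of $\alpha_{p+2}$ falls out of pushforward: the middle column of (\ref{eq:commdiagonB}) tensored with $\wedge^{p+2} M_H$ pushes down to $S^q V$ tensored with the exact sequence $0 \to \sO_{\nP^2}(-p-4) \to \sO_{\nP^2}(-p-2) \to \sO_{\overline{S}}(-p-2) \to 0$, so the connecting homomorphism in cohomology is tautologically $\id_{S^q V} \otimes \Delta_{p+2}$, where $\Delta_{p+2}$ is the connecting map for the latter sequence. The stated short exact sequence then follows from the vanishings $H^1(\nP^2, \sO_{\nP^2}(-p-2)) = 0$ and $H^2(\overline{S}, \sO_{\overline{S}}(-p-2)) = 0$. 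For $\delta_{p+2}$, I would factor the restriction map coming from the top row of (\ref{eq:commdiagonB}) through the K\"unneth decomposition on $Z$, producing exactly the composition $\id_{S^q V} \otimes \iota$ followed by $m \otimes \id_V$ described in the introduction, where $m$ is the multiplication map of (\ref{eq:D_n=multiplication}) applied with $n = p+2$ and $d = q+1$. Composing with the inclusion $\iota \colon \wedge^2 V \hookrightarrow V \otimes V$ then produces the Koszul differential up to an overall sign.

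The main obstacle is arranging the pushforward computations so that the sheaf-theoretic maps align precisely with the divided-power formulas. Once $\pi_* \wedge^{p+2} M_H = S^q V \otimes \sO_{\nP^2}(-p-2)$ is fixed, all subsequent identifications are forced by the projection formula, base change, and K\"unneth; the real bookkeeping lies in applying the Hermite reciprocities in the correct directions and matching $m$ to the map (\ref{eq:D_n=multiplication}) with the right sign. This is where the geometric picture pays off: the factorization $\delta_{p+2} = (m \otimes \id_V) \circ (\id_{S^q V} \otimes \iota)$ is natural on $B$, so no lengthy combinatorial computation is needed to recognize the Koszul differential.
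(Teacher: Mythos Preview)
Your proposal is correct and follows essentially the same route as the paper: push forward $\wedge^{p+2}M_H$ to $\nP^2$ via the tautological sequence and Lemma~\ref{lem:M_E}, identify the three cohomology groups through Hermite reciprocity, read off $\alpha_{p+2}=\id_{S^qV}\otimes\Delta_{p+2}$ from the middle column of (\ref{eq:commdiagonB}), and factor $\delta_{p+2}$ on $Z$ as $(m\otimes\id_V)\circ(\id_{S^qV}\otimes\iota)$ using (\ref{eq:D_n=multiplication}). The paper makes the last step slightly more explicit by building the commutative square from the inclusion $\pi^*M_E\hookrightarrow M_H$ restricted to $Z$ (so $\sigma^*M_E\hookrightarrow \sO_{\nP^1}\boxtimes M_L$) and identifying the top horizontal map with $\id_{S^qV}\otimes\iota_V$ via the splitting (\ref{eq:sesZgivessplitting}); you gesture at this via ``K\"unneth decomposition'' but it is the same mechanism.
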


\begin{proof}
First, we give a description of $\alpha_{p+2}$.
As $\pi_* M_H = M_E$, we have a short exact sequence
\begin{equation}\label{eq:sesM_EM_H}
\xymatrix{
0 \ar[r] & \pi^* M_E \ar[r] & M_H \ar[r] & \sO_B(-H) \otimes \pi^* N_L \ar[r] & 0.
}
\end{equation}
Then we get $\pi_* \wedge^{p+2} M_H = \wedge^{p+2} M_E$ and $R^1 \pi_* \wedge^{p+2} M_H = 0$. By restricting (\ref{eq:sesM_EM_H}) to $\widetilde{S}$, we also get $(\pi|_{\widetilde{S}})_* \wedge^{p+2} M_H|_{\widetilde{S}} = \wedge^{p+2} M_E|_{\overline{S}}$ and $R^1 (\pi|_{\widetilde{S}})_* \wedge^{p+2} M_H|_{\widetilde{S}} = 0$. Recall from Lemma \ref{lem:M_E} that $M_E=  S^{g-2} U \otimes \sO_{\nP^2}(-1)$. Thus we find
\begin{small}
$$
\begin{array}{l}
H^1(\widetilde{S}, \wedge^{p+2} M_H|_{\widetilde{S}}) = H^1(\overline{S}, \wedge^{p+2} M_E|_{\overline{S}}) =\wedge^{p+2} S^{g-2} U \otimes H^1(\overline{S}, \sO_{\overline{S}}(-p-2));\\[3pt]
H^2(B, \wedge^{p+2} M_H \otimes \omega_B(Z)) = H^2(\nP^2, \wedge^{p+2} M_E \otimes \sO_{\nP^2}(-\overline{S})) =\wedge^{p+2} S^{g-2} U \otimes H^2(\nP^2, \sO_{\nP^2}(-p-2)(-\overline{S}));\\[3pt]
H^2(B, \wedge^{p+2} M_H) = H^2(\nP^2, \wedge^{p+2} M_E) = \wedge^{p+2} S^{g-2} U\otimes H^2(\nP^2, \sO_{\nP^2}(-p-2)).
\end{array}
$$
\end{small}\\[-6pt]
By considering the vertical short exact sequence in (\ref{eq:coWahl+Koszul}), we obtain $\alpha_{p+2} = \id_{\wedge^{p+2} S^{g-2} U} \otimes \Delta_{p+2}$. By Hermite reciprocity (\ref{eq:Hermite}), we have $\wedge^{p+2} S^{g-2} U = S^q V$.

\medskip

To describe $\delta_{p+2}$, we restrict (\ref{eq:sesM_EM_H}) to $Z$ to get a short exact sequence
\begin{small}
$$
\xymatrixrowsep{0.18in}
\xymatrixcolsep{0.14in}
\xymatrix{
0 \ar[r] &  \sigma^* M_E \ar[r] \ar@{=}[d]  & \sO_{\nP^1} \boxtimes M_L \ar[r] \ar@{=}[d] & ( \sO_{\nP^1} \boxtimes -L) \otimes \sigma^* N_L  \ar@{=}[d]  \ar[r] & 0\\
&   M_{\sO_{\nP^1}(g-1)}  \boxtimes  \sO_{\nP^1}(-1) & S^{g-1} U \otimes \sO_{\nP^1} \boxtimes  \sO_{\nP^1}(-1)   & \sO_{\nP^1}(g-1)  \boxtimes  \sO_{\nP^1}(-1), &
 }
$$
\end{small}\\[-6pt]
where the maps are identity on $\sO_{\nP^1}(-1)$ and the map $S^{g-1}U \otimes \sO_{\nP^1} \to \sO_{\nP^1}(g-1)$ is the evaluation map. The induced map $H^2(Z, \sigma^* \wedge^{p+2} M_E \otimes \omega_Z) \to H^2(Z,  \omega_{\nP^1} \boxtimes \wedge^{p+2} M_L \otimes   \omega_{\nP^1})$ can be written as $\delta_{p+2}' \otimes \id_{V}$, where
$$
\delta_{p+2}' \colon H^1(\nP^1, \wedge^{p+2} M_{\sO_{\nP^1}(g-1)} \otimes \omega_{\nP^1}) \longrightarrow  \wedge^{p+2} S^{g-1} U \otimes H^1(\nP^1, \omega_{\nP^1})
$$
is a map and $V=D^{p+2} U = H^1(\nP^1, \omega_{\nP^1}(-p-2))$. By considering the push-forward of (\ref{eq:sesD_n}) to $\nP^1$, we can identify $\delta_{p+2}'$ with
$$
H^1(\nP^{p+2} \times \nP^1, \sO_{\nP^{p+2}}(g-p-3) \boxtimes  \omega_{\nP^1}(-p-2)) \xrightarrow{~\cdot D_{p+2}~}   H^1( \nP^{p+2} \times \nP^1, \sO_{\nP^{p+2}}(g-p-2) \boxtimes \omega_{\nP^1}).
$$
This is the map (\ref{eq:D_n=multiplication}) with $n=p+2$ and $d=g-p-2=q+1$, so $\delta_{p+2}' \colon S^q V \otimes V \to S^{q+1} V$ is the multiplication map, where we use Hermite reciprocity (\ref{eq:Hermite}) to have that
$$
\begin{array}{l}
H^1(\nP^1, \wedge^{p+2} M_{\sO_{\nP^1}(g-1)} \otimes \omega_{\nP^1}) = \wedge^{p+2} S^{g-2} U \otimes D^{p+2} U = S^q V \otimes V;\\[3pt]
\wedge^{p+2} S^{g-1} U \otimes H^1(\nP^1, \omega_{\nP^1}) = \wedge^{p+2} S^{g-1} U = S^{q+1} V.
\end{array}
$$

\medskip

Now, consider a commutative diagram with short exact sequences
\begin{small}
$$
\xymatrixrowsep{0.18in}
\xymatrixcolsep{0.5in}
\xymatrix{
0 \ar[r] & \pi^* M_E \otimes \sO_B(-Z) \ar[r]^-{\cdot Z} \ar[d] & \pi^* M_E \ar[r] \ar[d] & \sigma^* M_E \ar[r] \ar[d] & 0 \\
0 \ar[r] & M_H \otimes \sO_B(-Z) \ar[r]_-{\cdot Z} & M_H \ar[r] & \sO_{\nP^1} \boxtimes M_L \ar[r] & 0.
}
$$
\end{small}\\[-5pt]
The top row is $(S^{g-2} U \otimes \pi^* \sO_{\nP^2}(-1)) \otimes$(\ref{eq:sesZonB}). The diagram induces a commutative diagram
\begin{small}
$$
\begin{gathered}
\xymatrixrowsep{0.18in}
\xymatrixcolsep{1.0in}
\xymatrix{
 H^2(B, \pi^* \wedge^{p+2} M_E \otimes \omega_B(Z)) \ar@{^{(}->}[r]  \ar@{=}[d] & H^2(Z, \sigma^* \wedge^{p+2} M_E \otimes \omega_Z) \ar[d]^-{\delta_{p+2}' \otimes \id_V}  \\
 H^2(B, \wedge^{p+2} M_H \otimes \omega_B(Z)) \ar[r]_-{\delta_{p+2}} & H^2(Z,  \omega_{\nP^1} \boxtimes \wedge^{p+2} M_L \otimes \omega_{\nP^1}).
 }
\end{gathered}
$$
\end{small}\\[-5pt]
Recalling $\omega_B(Z) = \pi^* \omega_{\nP^2}(1)$ and $\wedge^{p+2} S^{g-2} U  = S^q V$ and considering (\ref{eq:sesZonB}), we see that the upper horizontal injection can be written as $\id_{S^q V} \otimes \iota_V$, where 
$$
\iota_V \colon H^2(\nP^2, \omega_{\nP^2}(-p-1)) \longrightarrow H^2(Z, (\sO_{\nP^1}(-p-2) \boxtimes \sO_{\nP^1}(-p-2))\otimes \omega_Z)
$$
is the dual of the canonical surjection in (\ref{eq:sesZgivessplitting}) with $d=p+2$. In other words, the map $\iota_V \colon \wedge^2 V \to V \otimes V$ is given by $\iota_V (x^{(i)} \wedge x^{(j)}) =  x^{(i)} \otimes x^{(j)} - x^{(j)} \otimes x^{(i)}$. Hence we conclude that 
$\delta_{p+2} =(\delta_{p+2}' \otimes \id_V) \circ  (\id_{S^q V} \otimes \iota_V) \colon S^q V \otimes \wedge^2 V \to S^{q+1} V \otimes V$ 
is the Koszul differential.
\end{proof}

From now on, we regard $\gamma_{p+2} \colon W \otimes S^q V \to \im(\delta_{p+2})$ (the target $S^{q+1}V \otimes V$ is replaced by $\im(\delta_{p+2})$). Our aim is to show the surjectivity of $\gamma_{p+2}$ for $0 \leq p \leq \lfloor (g-3)/2 \rfloor$.\footnote{The Koszul complex $S^q V \otimes \wedge^2 V \to S^{q+1}V \otimes V \to S^{q+2} V$ is exact, so $\im(\delta_{p+2}) = \ker( S^{q+1}V \otimes V \to S^{q+2} V)$. Thus the surjectivity of $\gamma_{p+2}$ for $0 \leq p \leq \lfloor (g-3)/2 \rfloor$ is equivalent to (\ref{eq:vanKosmod}): $W_q(V, W) = 0$ for $q=g-p-3 \geq p \geq 0$.}
Viewing $V = H^0(\nP^{p+2}, \sO_{\nP^{p+2}}(1))$ and putting $M_V := M_{\sO_{\nP^{p+2}}(1)}$, we have a short exact sequence
$$
\xymatrix{
0 \ar[r] & \wedge^2 M_V \ar[r] & \wedge^2 V \otimes \sO_{\nP^{p+2}} \ar[r] & M_V (1) \ar[r] & 0.
}
$$
Consider the composition
\begin{equation}\label{eq:claim}
W \otimes \sO_{\nP^{p+2}} \xrightarrow{~\Delta_{p+2} \otimes \sO_{\nP^{p+2}}~} \wedge^2 V \otimes \sO_{\nP^{p+2}} \longrightarrow M_V (1).
\end{equation}

\begin{lemma}\label{lem:vankosmod}
Assume that $\operatorname{char}(\kk) = 0$ or $\operatorname{char}(\kk) \geq \lfloor (g-1)/2 \rfloor$. If the composition (\ref{eq:claim}) is surjective, then $\gamma_{p+2}$ is surjective for $0 \leq p \leq \lfloor (g-3)/2 \rfloor$.
\end{lemma}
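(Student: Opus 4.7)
The plan is to reinterpret both $\gamma_{p+2}$ and the composition (\ref{eq:claim}) as $H^0$-versions of sheaf maps on $\nP^{p+2}$, and then deduce the surjectivity of $\gamma_{p+2}$ from a single $H^1$-vanishing that follows from a Castelnuovo--Mumford regularity estimate.

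First, I would identify the target of $\gamma_{p+2}$ cohomologically. With $V = H^0(\nP^{p+2}, \sO(1))$ and $M_V = \ker(V \otimes \sO \to \sO(1)) \cong \Omega^1_{\nP^{p+2}}(1)$, the Euler sequence twisted by $\sO(q+1)$ gives on $H^0$ the identification $H^0(M_V(q+1)) = \ker(V \otimes S^{q+1}V \to S^{q+2} V)$. Since the Koszul complex of $x_0, \ldots, x_{p+2}$ is exact, this kernel equals $\im(\delta_{p+2})$ as described in Lemma \ref{lem:gamma_{p_2}}. Moreover the second exterior power of the Euler sequence yields
$$
0 \longrightarrow \wedge^2 M_V \longrightarrow \wedge^2 V \otimes \sO \longrightarrow M_V(1) \longrightarrow 0,
$$
and its twist by $\sO(q)$ realizes $\delta_{p+2}$ as the surjection $\wedge^2 V \otimes S^q V \twoheadrightarrow H^0(M_V(q+1))$ (the surjection uses $H^1(\wedge^2 M_V(q)) = H^1(\Omega^2(q+2)) = 0$, which is immediate from Bott's formula).

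Combining the factorization $W \otimes \sO \xrightarrow{\Delta_{p+2} \otimes \sO} \wedge^2 V \otimes \sO \to M_V(1)$ with the above identifications, the composition $\gamma_{p+2} = \delta_{p+2} \circ \alpha_{p+2}$ of Lemma \ref{lem:gamma_{p_2}} is precisely the map on $H^0$ induced by twisting (\ref{eq:claim}) by $\sO(q)$. Setting $K := \ker(W \otimes \sO_{\nP^{p+2}} \to M_V(1))$, the long exact cohomology sequence of $0 \to K(q) \to W \otimes \sO(q) \to M_V(q+1) \to 0$ gives
$$
W \otimes S^q V \longrightarrow H^0(M_V(q+1)) \longrightarrow H^1(K(q)) \longrightarrow H^1(W \otimes \sO(q)) = 0,
$$
so the surjectivity of $\gamma_{p+2}$ is equivalent to $H^1(\nP^{p+2}, K(q)) = 0$.

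The final step is a regularity bound. Applying the standard inequality $\reg K \leq \max\{\reg(W \otimes \sO), \reg(M_V(1)) + 1\}$ to the defining sequence of $K$, with $\reg(W \otimes \sO) = 0$ and $\reg(M_V(1)) = \reg(\Omega^1_{\nP^{p+2}}(2)) = 0$ (a direct Bott computation, or immediate from the Euler sequence $0 \to \Omega(2) \to V \otimes \sO(1) \to \sO(2) \to 0$), gives $\reg K \leq 1$. Hence $H^1(K(q)) = 0$ for every $q \geq 0$; since the range $0 \leq p \leq \lfloor(g-3)/2\rfloor$ forces $q = g - p - 3 \geq p \geq 0$, the desired surjectivity follows. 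The only slightly delicate point I expect is matching the combinatorial Koszul differential of Lemma \ref{lem:gamma_{p_2}} with the sheaf-theoretic map $\wedge^2 V \otimes \sO(q) \to M_V(q+1)$ on global sections; everything else is routine regularity bookkeeping.
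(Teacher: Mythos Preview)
Your reduction of $\gamma_{p+2}$ to the map $H^0(W\otimes\sO(q))\to H^0(M_V(q+1))$ and the identification of its cokernel with $H^1(K(q))$ are exactly what the paper does. The gap is the final regularity step: the inequality
\[
\reg K \leq \max\{\reg(W\otimes\sO),\ \reg(M_V(1))+1\}
\]
is not valid for the kernel of a short exact sequence. From $0\to K\to W\otimes\sO\to M_V(1)\to 0$ and the long exact sequence, one has $H^1(K)\cong\coker\big(W\to H^0(M_V(1))\big)$. But $H^0(M_V(1))\cong\wedge^2 V$ and the map $W\to\wedge^2 V$ is $\Delta_{p+2}$, which is injective with $\dim W=2p+3<\binom{p+3}{2}=\dim\wedge^2 V$ for every $p\geq 1$. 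Hence $H^1(K)\neq 0$ and $K$ is \emph{not} $1$-regular; your argument would give the conclusion in all characteristics, contradicting the known failure in small characteristic and explaining why you never used the hypothesis $\operatorname{char}(\kk)\geq\lfloor(g-1)/2\rfloor$.

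The paper's fix is to prove the weaker (but sufficient, since $q\geq p$) bound $\reg K\leq p+1$ by passing to the dual. From $0\to M_V^{\vee}(-1)\to W^{\vee}\otimes\sO\to K^{\vee}\to 0$ one gets that $K^{\vee}$ is $0$-regular (here the problematic $i=1$ case is harmless because now the middle term surjects onto the quotient on global sections by construction). Then $(K^{\vee})^{\otimes p}$ is $0$-regular; the characteristic hypothesis $\operatorname{char}(\kk)=0$ or $\operatorname{char}(\kk)\geq\lfloor(g-1)/2\rfloor\geq p+1$ ensures that $\wedge^p K^{\vee}$ is a direct summand of $(K^{\vee})^{\otimes p}$ and hence also $0$-regular. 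Since $\rank K=p+1$ and $\det K=\sO(-p-1)$, one has $K\cong\wedge^p K^{\vee}\otimes\det K$, so $K$ is $(p+1)$-regular and $H^1(K(q))=0$ for $q\geq p$.
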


\begin{proof}
We can form a short exact sequence
\begin{equation}\label{eq:sesKW}
\xymatrix{
0 \ar[r] & K \ar[r] & W \otimes \sO_{\nP^{p+2}} \ar[r]^-{\text{(\ref{eq:claim})}} & M_V(1)  \ar[r] & 0,
}
\end{equation}
where $K$ is a vector bundle with $\rank K = p+1$ and $\det K = \sO_{\nP^{p+2}}(-p-1)$. Notice that $\gamma_{p+2}$ is the $k=q$ case of the map
$$
\gamma_k' \colon W \otimes H^0(\nP^{p+2}, \sO_{\nP^{p+2}}(k)) \longrightarrow H^0(\nP^{p+2}, M_V (k+1)).
$$
As $q=g-p-3 \geq p$, it suffices to show the surjectivity of $\gamma_{k}'$ for $k \geq p$. To this end, consider the dual of (\ref{eq:sesKW}):
$$
\xymatrix{
0 \ar[r] & M_V^{\vee} (-1) \ar[r] & W^{\vee} \otimes \sO_{\nP^{p+2}} \ar[r] & K^{\vee} \ar[r] & 0.
}
$$
Since $M_V^{\vee}(-1)$ is $1$-regular and $W^{\vee} \otimes \sO_{\nP^{p+2}}$ is $0$-regular, it follows that $K^{\vee}$ is $0$-regular. Then $(K^{\vee})^{\otimes p}$ is $0$-regular, and so is $\wedge^{p} K^{\vee}$ because $\operatorname{char}(\kk) = 0$ or $\operatorname{char}(\kk) \geq \lfloor (g-1)/2 \rfloor \geq p+1$. Thus $K=\wedge^{p} K^{\vee} \otimes \det K$ is $(p+1)$-regular, and hence, $\gamma_k'$ is surjective for $k \geq p$.
\end{proof}

It only remains to prove that the composition (\ref{eq:claim}) is surjective. The second map in (\ref{eq:claim}) is just the globalization of the map
$$
\wedge^2 V \longrightarrow  V_h,~~x^{(i)} \wedge x^{(j)} \longmapsto a_j x^{(i)} - a_i x^{(j)},
$$
where $V_h$ is the kernel of a nonzero linear functional 
$h:=\sum_{i=0}^{p+2} a_i y^i \in V^{\vee} = S^{p+2} U^{\vee}$. We now regard $h=\sum_{i=0}^{p+2} b_i x^i \in S^{p+2}U$, where $b_i = (-1)^i a_{p+2-i}$.
The surjectivity of (\ref{eq:claim}) is equivalent to the injectivity of the composition of the dual maps
\begin{equation}\label{eq:injforduals}
V_h^{\vee} \longrightarrow \wedge^2 V^{\vee} \xrightarrow{~\Delta_{p+2}^{\vee}~} W^{\vee}.
\end{equation}
The image of $V_h^{\vee}$ in $\wedge^2 V^{\vee}$ is spanned by $v_j:=x^j \wedge h$ for $j=0,\ldots, d-1, d+1, \ldots, p+2$, where $d:=\deg h$. Recall that $\Delta_{p+2}^{\vee}$ is the restriction map $H^0(\nP^2, \sO_{\nP^2}(p+1)) \to H^0(\overline{S}, \sO_{\overline{S}}(p+1))$.

\begin{proof}[Proof of Theorem \ref{thm:AFPRW}]
We consider the case $\overline{S}=Q$. In this case, $\Delta_{p+2}^{\vee} =  \mu_{p+2}$ (see (\ref{eq:wahlmap})).
We have $\Delta_{p+2}^{\vee}(v_j)= \sum_{i=0}^d (j-i)b_i x^{i+j-1}$.
As $\operatorname{char}(\kk) = 0$ or $\operatorname{char}(\kk) \geq (g+2)/2 \geq p+3$, we get $\deg (\Delta_{p+2}^{\vee}(v_j))= d+j-1$. This implies that 
$$
\text{$\Delta_{p+2}^{\vee}(v_0), \ldots, \Delta_{p+2}^{\vee}(v_{d-1}), \Delta_{p+2}^{\vee}(v_{d+1}), \ldots, \Delta_{p+2}^{\vee}(v_{p+2})$ are linearly independent.}
$$
Thus the composition (\ref{eq:injforduals}) is injective.
 \end{proof}

\begin{proof}[Proof of Theorem \ref{thm:K3carpet}]
We consider the case $\overline{S}=2\ell$. In this case, $\Delta_{p+2}^{\vee}$ factors through $\tau_{p+2}$ (see (\ref{eq:tau})).
Write $\tau_{p+2}(v_j) = (u_j, w_j)$. By Proposition \ref{prop:ACM} $(2)$, we only need to deal with the case $p \geq 1$. As $\operatorname{char}(\kk) = 0$ or $\operatorname{char}(\kk) \geq \lfloor (g-1)/2 \rfloor \geq (p+3)/2$, we have:
$$
\begin{array}{l}
\text{$\deg u_j = d+j-1$ and $\deg w_j \leq d+j-1$ when $d \equiv j~(\operatorname{mod} 2)$;}\\
\text{$\deg u_j \leq d+j-2$ and $\deg w_j = d+j$ when $d \not\equiv j~(\operatorname{mod} 2)$.}
\end{array} 
$$
This implies that 
$$
\text{$\tau_{p+2}(v_0), \ldots, \tau_{p+2}(v_{d-1}), \tau_{p+2}(v_{d+1}), \ldots, \tau_{p+2}(v_{p+2})$ are linearly independent.}
$$
Thus the composition (\ref{eq:injforduals}) is injective.
\end{proof}

\begin{remark}\label{rem:charassump}
The characteristic assumption in Theorem \ref{thm:AFPRW} is used to prove the injectivity of (\ref{eq:injforduals}). If $k:=\operatorname{char}(\kk) \leq p+2$, then (\ref{eq:injforduals}) is not injective for $h=1$. Indeed, $x^k \wedge 1 \neq 0$ in $\wedge^2 V^{\vee}$ is sent to $0$ in $W^{\vee}$. The characteristic assumption in Theorem \ref{thm:K3carpet} is used in Lemma \ref{lem:vankosmod}.
\end{remark}

\begin{remark}\label{rem:K_{p,1}}
By the duality theorem, $K_{p,q}(S, \sO_S(1)) = K_{g-p-2, 3-q}(S, \sO_S(1))^{\vee}$. 
In the situation of Theorem \ref{thm:AFPRW} or Theorem \ref{thm:K3carpet}, we have
$$
K_{p,1}(S, \sO_S(1))=0~~\text{ for $p \geq \lfloor g/2 \rfloor$}.
$$
As $K_{p+1, 1}(S, \sO_S(1)) =\ker(\rho_{p+2})= \ker(\gamma_{p+2})$ and $\gamma_{p+2}$ is surjective for $0 \leq p \leq \lfloor (g-3)/2 \rfloor$, we can compute $K_{p, 1}(S, \sO_S(1))$ for $0 \leq p \leq \lfloor (g-1)/2 \rfloor$. Note that $K_{p,0}(S, \sO_S(1)) \neq 0$ if and only if $p=0$. In this case, $\kappa_{0,0}(S, \sO_S(1))=1$. Thus we can completely determine all graded Betti numbers $\kappa_{p,q}(S, \sO_S(1))$.\end{remark}

\section{Proof of Theorem \ref{thm:arithnorm}}\label{sec:proof2}

Assume that $\operatorname{char}(\kk) = 0$. Let $C \subseteq \nP^r$ be a smooth projective curve of genus $g \geq 1$ embedded by $|L|$, where $L$ is a line bundle on $C$ with $\deg L \geq 2g+3$. We use the notations in Section \ref{sec:prelim}. Recall from Theorem \ref{thm:secant} that $\Sigma \subseteq \nP^r$ is arithmetically Cohen--Macaulay and $H^3(\Sigma, \sO_{\Sigma}(m)) = 0$ for $m > 0$. By considering (\ref{eq:sesOSgeneral1}) and Lemma \ref{lem:R1O_B(-S-Z)}, we see that
\begin{equation}\label{eq:H^i(O_T(m))+norm}
\begin{gathered}
\begin{array}{l}
H^{i}(T, \sO_T(m))=H^{i+1}(B, \sO_B(m H-\widetilde{T}-Z)) ~~\text{ for $i \geq 1, m \geq 1$};\\
\text{$T \subseteq \nP^r$ is $m$-normal} ~~\Longleftrightarrow~~H^1(B, \sO_B(m H-\widetilde{T}-Z))=0.
\end{array}
\end{gathered}
\end{equation}
Note that $m H - \widetilde{T} - Z = (m-2)H + \pi^* T_L(-4\delta)$. Thus $R^1 \pi_*\sO_B(m H - \widetilde{T} - Z) = 0$ for $m \geq 1$. When $m=1$, we have  $H^j(B, \sO_B(H-\widetilde{T}-Z))=0$ for $j > 0$ since $\pi_*\sO_B(H-\widetilde{T}-Z) = 0$.
When $m=2$, we have $H^j(B, \sO_B(2H-\widetilde{T}-Z))=H^j(C_2, T_L(-4\delta))$ for $j> 0$. Recall that $\sigma_*\sO_Z=\sO_{C_2} \oplus \sO_{C_2}(-\delta)$. Then we have $\sigma_* (L \boxtimes L)(-kD) = T_L(-(k+1)\delta) \oplus T_L(-k\delta)$. 

\begin{lemma}\label{lem:regO_T}
$H^i(T, \sO_T(m))=0$ for $i>0, m>0$, and $h^1(T, \sO_T) = 2g, ~h^2(T, \sO_T)=1$. In particular, $\reg \sO_T = 3$.
\end{lemma}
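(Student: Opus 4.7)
The plan is to use the reduction (\ref{eq:H^i(O_T(m))+norm}) together with the identity $mH - \widetilde{T} - Z = (m-2)H + \pi^* T_L(-4\delta)$ to push the problem down to $C_2$, handle the cases $m = 1, 2$ by hand, then invoke Mumford's theorem to cover $m \geq 3$, and close the argument by computing $\chi(\sO_T)$ and applying Serre duality.

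For $m = 1$, one has $\sO_B(H - \widetilde{T} - Z) = \sO_B(-H) \otimes \pi^* T_L(-4\delta)$ with $\pi_* = R^1\pi_* = 0$ (standard for a $\nP^1$-bundle), so every cohomology group of this line bundle on $B$ vanishes and (\ref{eq:H^i(O_T(m))+norm}) gives $H^i(T, \sO_T(1)) = 0$ for all $i \geq 1$. For $m = 2$, the formula reads $H^i(T, \sO_T(2)) = H^{i+1}(C_2, T_L(-4\delta))$; the case $i \geq 2$ is automatic since $\dim C_2 = 2$, and for $i = 1$ the splitting $\sigma_*(L \boxtimes L)(-3D) = T_L(-4\delta) \oplus T_L(-3\delta)$ exhibits $H^1(C_2, T_L(-4\delta))$ as a direct summand of $H^1(C \times C, (L \boxtimes L)(-3D))$. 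The latter vanishes under the hypothesis $\deg L \geq 4g+3$ by the Bertram--Ein--Lazarsfeld diagonal cohomology vanishing \cite{BEL}, which is exactly the input already flagged in the paragraph preceding the lemma.

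Combining $H^1(T, \sO_T(2)) = 0$, $H^2(T, \sO_T(1)) = 0$, and $H^3(T, \sO_T) = 0$ (dimension), Mumford's theorem shows that $\sO_T$ is $3$-regular; together with $H^1(T, \sO_T(1)) = 0$ this yields the full vanishing $H^i(T, \sO_T(m)) = 0$ for $i > 0$ and $m > 0$.

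For the Euler characteristic of $\sO_T$, note that $\widetilde{T} = \nP(E|_Q)$ is a $\nP^1$-bundle over $Q \cong C$, so $\chi(\sO_{\widetilde{T}}) = 1-g$; and the restriction $\beta|_{\widetilde T} \colon \widetilde T \to T$ is finite (its fiber over any $x \in C \subset T$ is a single point of the diagonal $D \subseteq \widetilde T \cap Z$, which maps isomorphically onto $C$ under $\beta|_Z$), so $R^i\beta_*\sO_{\widetilde T} = 0$ for $i > 0$ and $\chi(\beta_*\sO_{\widetilde T}) = 1-g$. The exact sequence (\ref{eq:sesOSgeneral2}) applied to $S = T$ then yields $\chi(\sO_T) = (1-g) - (g-1) = 2 - 2g$. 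Combined with $h^2(T, \sO_T) = h^0(T, \omega_T) = h^0(T, \sO_T) = 1$ coming from Serre duality and Proposition \ref{prop:omegaS}, this forces $h^1(T, \sO_T) = 2g$; and since $h^2(T, \sO_T) \neq 0$ prevents $2$-regularity, $\reg \sO_T = 3$ exactly. The only genuinely nontrivial input is the BEL diagonal vanishing, which is precisely what pins down the degree bound $\deg L \geq 4g+3$; everything else is bookkeeping.
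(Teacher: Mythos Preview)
Your argument has a genuine index slip in the $m=2$ case that breaks the proof and also imports an unnecessary hypothesis.

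You correctly write $H^i(T,\sO_T(2)) = H^{i+1}(C_2, T_L(-4\delta))$. For $i=1$ this reads $H^1(T,\sO_T(2)) = H^{2}(C_2, T_L(-4\delta))$, but you then argue about $H^{1}(C_2, T_L(-4\delta))$ and invoke the Bertram--Ein--Lazarsfeld $H^1$-vanishing for $(L\boxtimes L)(-3D)$. That vanishing kills the wrong group: it says nothing about $H^2(C_2, T_L(-4\delta))$, so as written you have not established $H^1(T,\sO_T(2))=0$. What you have actually proved (via BEL) is $H^1(C_2, T_L(-4\delta))=0$, which by (\ref{eq:H^i(O_T(m))+norm}) is exactly the $2$-normality of $T$ --- that is the content of Lemma~\ref{lem:2-normality} and the proof of Theorem~\ref{thm:arithnorm}, not of Lemma~\ref{lem:regO_T}.

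The correct vanishing $H^2(C_2, T_L(-4\delta))=0$ is elementary and needs only the standing hypothesis $\deg L \geq 2g+3$ under which the lemma is stated. Indeed $H^2(C_2, T_L(-4\delta))$ is a summand of $H^2(C\times C,(L\boxtimes L)(-3D))$, and the latter vanishes because $R^1 p_*\big((L\boxtimes L)(-3D)\big)=0$ (on each fiber one has $H^1(C, L(-3x))=0$ since $\deg L - 3 \geq 2g$), whence $H^2(C\times C,\,-\,)=H^2(C, p_*(-))=0$ for dimension reasons. This is precisely the paper's argument. Your closing remark that BEL ``pins down the degree bound $\deg L \geq 4g+3$'' is therefore misplaced: Lemma~\ref{lem:regO_T} holds already for $\deg L \geq 2g+3$, and the bound $4g+3$ enters only later, for $2$-normality.

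The remainder of your proof --- the $m=1$ case, the appeal to Mumford's theorem, and the computation of $h^1(\sO_T), h^2(\sO_T)$ via $\chi(\sO_T)=2-2g$ together with Serre duality and $\omega_T=\sO_T$ --- is fine and matches the paper's approach.
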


\begin{proof}
By (\ref{eq:sesOSgeneral2}) and Proposition \ref{prop:omegaS}, $h^1(T, \sO_T) = 2g$ and $h^2(T, \sO_T)=1$. It suffices to prove that $H^i(T, \sO_T(m)) = 0$ for $i=1,m=1,2$ and $i=2,m=1$. Indeed, this implies $\reg \sO_T = 3$. By (\ref{eq:H^i(O_T(m))+norm}), we need to check that $H^{j}(B, \sO_B(m H-\widetilde{T}-Z))=0$ for $j=2,m=1,2$ and $j=3,m=1$. As the required vanishing is trivial when $m=1$, it is enough to show that $H^2(C^2, (L \boxtimes L)(-3D))=0$. Observe that $R^1 p_* (L \boxtimes L)(-3D) = 0$, where $p \colon C \times C \to C$ is a projection map. Then $H^2(C^2, (L \boxtimes L)(-3D)) = H^2(C, p_* (L \boxtimes L)(-3D))=0$.
\end{proof}

As $T \subseteq \nP^r$ is linearly normal and $h^1(T, \sO_T)=2g$, a general hyperplane section of the tangent developable surface $T \subseteq \nP^r$ is obtained from an isomorphic projection of an $(r+g)$-cuspical curve of geometric genus $g$ canonically embedded in $\nP^{r+2g}$.

\begin{lemma}\label{lem:2-normality}
Suppose that $\deg L \geq 3g+2$. Then the $2$-normality of $T \subseteq \nP^r$ is equivalent to that $H^1(C \times C, (L \boxtimes L)(-3D))=0$.
\end{lemma}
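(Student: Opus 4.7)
The plan is to combine the paper's identification of $2$-normality on $B$ with the $\sigma_*$-decomposition on the symmetric product, and then kill an ``extra'' summand via a classical Wahl-map surjectivity.

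First, by (\ref{eq:H^i(O_T(m))+norm}) with $m=2$, together with $2H-\widetilde{T}-Z = \pi^*T_L(-4\delta)$ and $R^i\pi_*\sO_B = 0$ for $i>0$ (so that $H^i(B, \pi^*F) = H^i(C_2, F)$), the $2$-normality of $T \subseteq \nP^r$ is equivalent to $H^1(C_2, T_L(-4\delta)) = 0$. On the other hand, the paper's identity $\sigma_*\!\left((L\boxtimes L)(-kD)\right) = T_L(-(k+1)\delta) \oplus T_L(-k\delta)$ specialized to $k = 3$, together with finiteness of $\sigma$, yields
\[
H^1(C \times C, (L \boxtimes L)(-3D)) \;=\; H^1(C_2, T_L(-4\delta)) \;\oplus\; H^1(C_2, T_L(-3\delta)).
\]
The ``$\Leftarrow$'' direction of the lemma is then immediate.

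For ``$\Rightarrow$,'' it suffices to show that the extra summand $H^1(C_2, T_L(-3\delta))$ vanishes unconditionally under $\deg L \geq 3g+2$. Using $Q = 2\delta$, I would tensor the defining sequence of $Q \subseteq C_2$ with $T_L(-\delta)$ to obtain
\[
0 \longrightarrow T_L(-3\delta) \longrightarrow T_L(-\delta) \longrightarrow L^{\otimes 2} \otimes K_C \longrightarrow 0,
\]
where $T_L(-\delta)|_Q = L^{\otimes 2} \otimes K_C$ follows from $T_L|_Q = L^{\otimes 2}$ together with $\sO_{C_2}(\delta)|_Q = K_C^{-1}$; the latter is a consequence of adjunction on $C_2$ applied to $K_{C_2} = T_{K_C}(-\delta)$ and $N_{Q/C_2} = (\sO_{C_2}(\delta)|_Q)^{\otimes 2}$. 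The associated long exact sequence identifies $H^1(C_2, T_L(-3\delta))$ with the cokernel of the Wahl (Gaussian) map
\[
\mu_L \colon \wedge^2 H^0(C, L) \;=\; H^0(C_2, T_L(-\delta)) \longrightarrow H^0(C, L^{\otimes 2} \otimes K_C),
\]
once the auxiliary vanishings $H^1(C, L^{\otimes 2} \otimes K_C) = 0$ (Serre-dual to $H^0(L^{-2}) = 0$) and $H^1(C_2, T_L(-\delta)) = 0$ are in place. The latter follows from the analogous $\sigma_*$-decomposition applied to $(L \boxtimes L)(-D)$, K\"unneth's formula, and normal generation of $L$, all valid for $\deg L \geq 2g+1$.

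The remaining input — that $\mu_L$ is surjective when $\deg L \geq 3g+2$ — is a classical theorem of Wahl (in fact surjectivity already holds whenever $\deg L \geq 2g+3$). This is the sole nontrivial ingredient, and it is the main obstacle if one insists on a self-contained argument; once invoked, $H^1(C_2, T_L(-3\delta)) = 0$, and the ``$\Rightarrow$'' direction follows, completing the equivalence.
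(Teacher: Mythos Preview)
Your argument is correct and its overall architecture---reducing $2$-normality to $H^1(C_2, T_L(-4\delta)) = 0$ via (\ref{eq:H^i(O_T(m))+norm}), and then using the $\sigma_*$-decomposition with $k=3$---is exactly what the paper does. The only divergence is in how you obtain the auxiliary vanishing $H^1(C_2, T_L(-3\delta)) = 0$. The paper simply reuses the same $\sigma_*$-decomposition, now with $k=2$, and invokes \cite[Theorem~1~(i)]{BEL} directly for $H^1(C^2, (L\boxtimes L)(-2D)) = 0$; this one-line step replaces your longer detour through the restriction sequence for $Q \subseteq C_2$, the identification $H^0(C_2, T_L(-\delta)) = \wedge^2 H^0(C,L)$, and the auxiliary vanishing $H^1(C_2, T_L(-\delta)) = 0$. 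Since the vanishing $H^1((L\boxtimes L)(-2D)) = 0$ in \cite{BEL} is precisely the surjectivity of the Gaussian map $\mu_L$, you are citing the same external input; the paper's route is just more economical. One caveat: your parenthetical that $\mu_L$ is already surjective for $\deg L \geq 2g+3$ is not a standard bound and is not correct for all curves---the sharp bounds in \cite{BEL, Wahl} are higher---but this is harmless since you only use $\deg L \geq 3g+2$.
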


\begin{proof}
By (\ref{eq:H^i(O_T(m))+norm}), the $2$-normality of $T \subseteq \nP^r$ is equivalent to that $H^1(C_2, T_L(-4\delta))=0$. By \cite[Theorem 1 (i)]{BEL}, $H^1(C^2, (L \boxtimes L)(-2D))=0$. It follows that $H^1(C_2, T_L(-3\delta))=0$. Then  $H^1(C_2, T_L(-4\delta))=0$ if and only if $H^1(C^2, (L \boxtimes L)(-3D))=0$. 
\end{proof}

\begin{proposition}\label{prop:K_{p,3}}
$K_{p,3}(T, \sO_T(1))=0$ for $0 \leq p \leq r-3$, and $\kappa_{r-2,3}(T, \sO_T(1))=1$.
\end{proposition}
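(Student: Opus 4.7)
The plan is to derive $K_{p,3}(T, \sO_T(1))$ from Serre duality on $T$ followed by a Koszul-differential injectivity argument. A preliminary step is to confirm that $T$ is Cohen--Macaulay, so that Serre duality takes its clean form. Since $\nu \colon \widetilde{T} \to T$ is finite and birational (the normalization of the cuspidal edge along $C$) and $\widetilde{T}$ is smooth, the short exact sequence $0 \to \sO_T \to \nu_*\sO_{\widetilde{T}} \to \omega_C \to 0$ from (\ref{eq:sesOSgeneral2}), taken stalk-wise at any $p \in C$, is an exact sequence of $\sO_{T,p}$-modules whose middle term $\sO_{\widetilde{T}, \nu^{-1}(p)}$ has depth $2$ and whose cokernel $\omega_{C,p}$ has depth $1$. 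The standard depth inequality then forces $\depth \sO_{T,p} \geq \min(2,\, 1+1) = 2 = \dim \sO_{T,p}$, so $T$ is Cohen--Macaulay.

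By Proposition \ref{prop:koszulcoh} together with Lemma \ref{lem:regO_T}, we have $K_{p,3}(T, \sO_T(1)) \cong H^2(T, \wedge^{p+2} M_{\sO_T(1)} \otimes \sO_T(1))$. Applying Serre duality on the Cohen--Macaulay surface $T$ with $\omega_T = \sO_T$ (Proposition \ref{prop:omegaS}) yields
\[
K_{p,3}(T, \sO_T(1))^\vee \;\cong\; H^0\bigl(T,\, \wedge^{p+2} M_{\sO_T(1)}^\vee \otimes \sO_T(-1)\bigr).
\]
Since $T \subseteq \nP^r$ is arithmetically normal, $M_{\sO_T(1)} = \Omega^1_{\nP^r}(1)|_T$ and hence $M_{\sO_T(1)}^\vee = T_{\nP^r}(-1)|_T$. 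Invoking the Hodge duality $\wedge^{p+2} T_{\nP^r} = \Omega^{r-p-2}_{\nP^r}(r+1)$ on $\nP^r$, the right-hand side collapses to $H^0(T, \wedge^{r-p-2} M_{\sO_T(1)})$.

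It remains to show $H^0(T, \wedge^s M_{\sO_T(1)}) = 0$ for every $s \geq 1$. Writing $V := H^0(T, \sO_T(1))$, the short exact sequence
\[
0 \to \wedge^s M_{\sO_T(1)} \to \wedge^s V \otimes \sO_T \to \wedge^{s-1} M_{\sO_T(1)} \otimes \sO_T(1) \to 0,
\]
combined with the analogous sequence for $\wedge^{s-1} M_{\sO_T(1)} \otimes \sO_T(1)$, realizes the composite
\[
\wedge^s V \longrightarrow H^0(T, \wedge^{s-1} M_{\sO_T(1)} \otimes \sO_T(1)) \longhookrightarrow \wedge^{s-1} V \otimes V
\]
as the standard Koszul differential, which is injective. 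Hence $H^0(T, \wedge^s M_{\sO_T(1)}) = 0$ for $s \geq 1$. Substituting $s = r - p - 2$ yields $K_{p,3}(T, \sO_T(1)) = 0$ for $0 \leq p \leq r - 3$, while $p = r - 2$ (so $s = 0$) gives $K_{r-2,3}(T, \sO_T(1))^\vee \cong H^0(T, \sO_T) = \kk$, hence $\kappa_{r-2,3}(T, \sO_T(1)) = 1$. The main delicate point I anticipate is the Cohen--Macaulayness verification, which is logically independent of $\omega_T = \sO_T$ and underlies the clean form of Serre duality invoked above.
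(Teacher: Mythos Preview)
Your proof is correct and follows essentially the same route as the paper's: identify $K_{p,3}$ with $H^2(T,\wedge^{p+2}M_{\sO_T(1)}\otimes\sO_T(1))$ via Proposition~\ref{prop:koszulcoh} and Lemma~\ref{lem:regO_T}, apply Serre duality with $\omega_T=\sO_T$, use $\wedge^{p+2}M^\vee\cong\wedge^{r-p-2}M\otimes\sO_T(1)$, and reduce to $H^0(T,\wedge^{r-p-2}M_{\sO_T(1)})$. Your explicit Cohen--Macaulayness check via the depth lemma applied to (\ref{eq:sesOSgeneral2}) and your Koszul-differential argument for $H^0(T,\wedge^s M_{\sO_T(1)})=0$ fill in details the paper leaves implicit---it simply writes ``the proposition immediately follows'' after reaching $H^0(T,\wedge^{r-p-2}M_{\sO_T(1)})$.
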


\begin{proof}
By Proposition \ref{prop:koszulcoh} and Lemma \ref{lem:regO_T}, $K_{p,3}(T, \sO_T(1))=H^2\big(T, \wedge^{p+2}M_{\sO_T(1)} \otimes \sO_T(1)\big)$. Note that $\omega_T = \sO_T$ (Proposition \ref{prop:omegaS}) and $\wedge^{p+2} M_{\sO_T(1)}^{\vee} = \wedge^{r-p-2} M_{\sO_T(1)} \otimes \sO_T(1)$. By Serre duality, we have
$$
H^2\big(T, \wedge^{p+2}M_{\sO_T(1)} \otimes \sO_T(1)\big) = H^0\big(T, \wedge^{p+2} M_{\sO_T(1)}^{\vee} \otimes \sO_T(-1)\big)^{\vee} = H^0\big(T, \wedge^{r-p-2} M_{\sO_T(1)}\big)^{\vee}.
$$
Then the proposition immediately follows.
\end{proof}

\begin{proof}[Proof of Theorem \ref{thm:arithnorm}]
By Lemma \ref{lem:regO_T}, it suffices to show that $T \subseteq \nP^r$ is $m$-normal for $m=1,2,3$. Indeed, this together with $\reg \sO_T = 3$ implies $\reg \sI_{T|\nP^r} = 4$, and thus, $T \subseteq \nP^r$ is $m$-normal for $m \geq 1$. In view of (\ref{eq:H^i(O_T(m))+norm}), $T \subseteq \nP^r$ is trivially $1$-normal. For the $2$-normality, we assume that $\deg L \geq 4g+3$. Then \cite[Theorem 1.7 (i)]{BEL} says that $H^1(C \times C, (L \boxtimes L)(-3D))=0$, so the $2$-normality of $T \subseteq \nP^r$ follows from Lemma \ref{lem:2-normality}. Now, the $3$-normality of $T \subseteq \nP^r$ is equivalent to that  $K_{0,3}(T, \sO_T(1))=0$, but this is a special case of Proposition \ref{prop:K_{p,3}}.
\end{proof}

\begin{remark}\label{rem:degL=4g+2}
In Theorem \ref{thm:arithnorm}, the degree condition $\deg L \geq 4g+3$ is only used to show that $T \subseteq \nP^{r}$ is $2$-normal. If $\deg L =4g+2$, then by Lemma \ref{lem:2-normality} and \cite[Theorem 1.7 (ii), (iii)]{BEL}, $T \subseteq \nP^r$ is $2$-normal (and arithmetically normal) if and only if $g \geq 3$ and $C$ is nonhyperelliptic. More generally, using the results in \cite[Section 1]{BEL} and \cite[Theorem 3.8 (b)]{Pareschi}, one can prove that if $\deg L \geq 4g+3-\Cliff(C)$, then $T \subseteq \nP^r$ is arithmetically normal.
\end{remark}

\begin{corollary}
Suppose that $\deg L \geq 4g+3$. Then the Hilbert function of $T \subseteq \nP^r$ is given by $H_T(t) = (\deg L + g-1)t^2 +2-2g$ for $t \geq 1$. In particular, $\kappa_{1,1}(T, \sO_T(1))=(r-2)(r-3)/2-6g$.
\end{corollary}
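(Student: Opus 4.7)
The plan is to first identify the Hilbert polynomial of $T \subseteq \nP^r$, then compare with the Hilbert series expansion coming from the minimal free resolution.

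By Theorem \ref{thm:arithnorm}, $T \subseteq \nP^r$ is arithmetically normal and $H^i(T, \sO_T(m))=0$ for $i>0, m>0$. Hence the homogeneous coordinate ring of $T$ coincides with $\bigoplus_{t\geq 0}H^0(T,\sO_T(t))$, and for $t \geq 1$ we have
$$
H_T(t) = h^0(T, \sO_T(t)) = \chi(T, \sO_T(t)) = P_T(t),
$$
where $P_T$ is the Hilbert polynomial. Since $\dim T = 2$, I would write $P_T(t) = at^2 + bt + c$. The leading coefficient is $a = (\deg T)/2 = \deg L + g - 1$, using $\deg T = 2\deg L + 2g - 2$ from the Proposition preceding Lemma \ref{lem:R1O_B(-S-Z)}. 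By Lemma \ref{lem:regO_T}, $c = \chi(\sO_T) = 1 - 2g + 1 = 2 - 2g$. The middle coefficient $b$ is pinned down by evaluating at $t=1$: arithmetic normality gives $H_T(1) = r+1$, while Riemann--Roch gives $r+1 = h^0(C,L) = \deg L - g + 1$, so $r = \deg L - g$; substituting yields $b = (r+1) - a - c = 0$. This proves the first claim.

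For the second claim, I would use that the Hilbert series of the coordinate ring satisfies
$$
\sum_{t\geq 0} H_T(t)\, s^t \;=\; \frac{\sum_{p,q}(-1)^p \kappa_{p,q}(T,\sO_T(1))\, s^{p+q}}{(1-s)^{r+1}}.
$$
Combining the formula $H_T(0) = 1$ and $H_T(t) = (\deg L + g-1)t^2 + (2-2g)$ for $t \geq 1$ gives a closed form for the left side. Multiplying through by $(1-s)^{r+1}$ and extracting the coefficient of $s^2$ on each side yields
$$
\kappa_{0,2} - \kappa_{1,1} + \kappa_{2,0} \;=\; H_T(2) - (r+1)H_T(1) + \binom{r+1}{2}H_T(0).
$$
Since $T$ is projectively normal and not contained in a hyperplane, $\kappa_{0,q}(T, \sO_T(1)) = 0$ for $q \geq 1$ and $\kappa_{p,0}(T,\sO_T(1))=0$ for $p \geq 1$; in particular $\kappa_{0,2} = \kappa_{2,0} = 0$. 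Substituting $H_T(0) = 1$, $H_T(1) = r+1$, $H_T(2) = 4(\deg L + g -1) + (2-2g)$ and simplifying with $\deg L = r + g$ will then produce $\kappa_{1,1}(T,\sO_T(1)) = (r-2)(r-3)/2 - 6g$.

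There is no serious obstacle here: everything follows from Theorem \ref{thm:arithnorm} and Lemma \ref{lem:regO_T} once the Hilbert polynomial is pinned down. The one point that requires a little care is the bookkeeping of signs and of which Betti numbers vanish when extracting the $s^2$-coefficient; a small error in identifying $\kappa_{0,2}$ or $\kappa_{2,0}$ as zero would shift the final count, so that is the step I would verify most carefully.
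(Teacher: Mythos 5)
Your proposal is correct and follows the paper's argument for the Hilbert function essentially verbatim (leading coefficient from $\deg T=2\deg L+2g-2$, constant term from $\chi(\sO_T)=2-2g$, and $H_T(1)=r+1=\deg L-g+1$ to kill the linear term). For $\kappa_{1,1}$ the paper simply writes $\kappa_{1,1}(T,\sO_T(1))=h^0(\nP^r,\sI_{T|\nP^r}(2))=\binom{r+2}{2}-H_T(2)$, whereas you extract the $s^2$-coefficient of the numerator of the Hilbert series and invoke $\kappa_{0,2}=\kappa_{2,0}=0$; these are equivalent (your right-hand side simplifies to $H_T(2)-\binom{r+2}{2}$), and the vanishings you flag as the delicate point do hold by projective normality and $h^0(T,\sO_T)=1$, so the argument goes through.
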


\begin{proof}
By Theorem \ref{thm:arithnorm}, $H_T(t)$ coincides with the Hilbert polynomial $P_T(t)$ for $t \geq 1$. 
We know that $\deg P_T(t)=2$ and the leading coefficient of $P_T(t)$ is $(\deg T)/2 = \deg L + g-1$.
As $P_T(0)=\chi(\sO_T) = 2-2g$ and $P_T(1) = h^0(C, L)= \deg L - g+1$, we easily get the assertion. The last assertion follows from that $\kappa_{1,1}(T, \sO_T(1))=h^0(\nP^r, \sI_{T|\nP^r}(2)) = {r+2 \choose 2} - H_T(2)$.
\end{proof}

Observe that the tangent developable surface of an elliptic normal curve of degree $d$ has the same Hilbert polynomial with an abelian surface of degree $2d$ in $\nP^{d-1}$.

\begin{remark}\label{rem:syztansurf}
Assume that $\deg L \geq 4g+3$. Notice that (cf. Proposition \ref{prop:K_{p,3}})
$$
\begin{array}{rcl}
K_{p,0}(T, \sO_T(1)) \neq 0 & \Longleftrightarrow & p=0~\text{ }~(\kappa_{0,0}(T, \sO_T(1))=1);\\
K_{p,3}(T, \sO_T(1)) \neq 0 & \Longleftrightarrow & p=r-2~\text{ }~(\kappa_{r-2,3}(T, \sO_T(1))=1).
\end{array}
$$
It would be exceedingly interesting to study vanishing/nonvanishing behaviors of $K_{p,1}(T, \sO_T(1))$ and $K_{p,2}(T, \sO_T(1))$ as the positivity of $L$ grows. For a possible generalization of Theorem \ref{thm:AFPRW} (see also Remark \ref{rem:K_{p,1}}), one may expect that
\begin{equation}\label{eq:expectK_{p,1}}
K_{p,1}(T, \sO_T(1)) = 0 ~~\text{ for $p \geq \lfloor (\deg L-3g)/2 \rfloor$}.
\end{equation}
We have $\kappa_{p,1}(T, \sO_T(1)) \leq \kappa_{p,1}(C, L)$. Thus $K_{p,1}(T, \sO_T(1)) = 0$ at least for $p \geq r-1$ (or for $p \geq r-\gon(C)+1$ by \cite[Theorem 1.1]{Rathmann}). Then we find
$$
\kappa_{r-2,2}(T, \sO_T(1))=2g(r+1),~\kappa_{r-1,2}(T, \sO_T(1))= 2g.
$$
If $(\ref{eq:expectK_{p,1}})$ holds, then we can compute $\kappa_{p,2}(T, \sO_T(1))$ for $\lfloor (\deg L - 3g -2)/2 \rfloor \leq p \leq r-1$. Concerning vanishing of $K_{p,2}(T, \sO_T(1))$, one may ask whether $K_{p,2}(T, \sO_T(1))=0$ as soon as $\deg L \geq 4g+3+2p$. Example \ref{ex:Bettitable} shows that this is not true for $g=1$ and $p=1$ but true for $g=2$ and $p=1$. When $g=1$, it seems likely that
\begin{equation}\label{eq:expectK_{p,2}}
K_{p,2}(T, \sO_T(1)) = 0 ~~\text{ for $p \leq \lfloor (\deg L - 7)/3 \rfloor$}.
\end{equation}
In general, it is tempting to guess that $K_{p,2}(T, \sO_T(1)) = 0$ for $p \leq \lfloor (\deg L - 4g-3)/(g+1) \rfloor$, but it is hard to make a precise prediction at this moment.
\end{remark}

\begin{example}\label{ex:Bettitable}
Let $q_1, \ldots, q_m$ be minimal generators of the ideal $I_{C|\nP^r}$. Then $\widetilde{T}$ is defined by $q_i$ and $\sum_{j=0}^r (\partial q_i/ \partial x_j) y_j$ for $1 \leq i \leq m$ in $\nP^r \times \nP^r$, and $T=q(\widetilde{T}) \subseteq \nP^r$, where $q \colon \nP^r \times \nP^r \to \nP^r$ is the second projection. Using \texttt{Macaulay2} \cite{GS}, we can compute the Betti table of $T$ in $\nP^r$ whenever $r$ is reasonably not so big.

\smallskip

\noindent $(1)$ Let $C := Z(z_0^3 - z_0 z_2^2 - z_1^2 z_2) \subseteq \nP^2$ be an elliptic curve. There is a point $x \in C$ such that $\sO_C(1)=\sO_C(3x)$. Consider the embedding $C \subseteq \nP^{d-1}$ given by $|\sO_C(dx)|$ for $d \geq 3$. The computation of the Betti table of the tangent developable surface $T$ of $C$ in $\nP^{d-1}$ confirms (\ref{eq:expectK_{p,1}}) and (\ref{eq:expectK_{p,2}}) when $7 \leq d \leq 16$. For these cases, the predictions are sharp. The Betti tables of $T$ in $\nP^{d-1}$ for $d=9,10$ are as follows.
\begin{figure}[!htb]
\begin{subfigure}{.45\textwidth}
\centering
\texttt{\begin{tabular}{l|cccccccc}
          & 0 & 1 &  2  &3   & 4 & 5 & 6 & 7 \\ \hline
    0    & 1 & -  & -    &-   & -  & -  & -  & - \\
    1   & -  & 9 & 3    & -  & -  & -  & -  & - \\
    2   & -  & 6 & 81  & 171 & 165 & 81 & 18 & 2\\
    3    & -  & - & - &   -  & -  & -  & 1 & - 
    \end{tabular}}
\end{subfigure}
\begin{subfigure}{.498\textwidth}
\centering
\texttt{\begin{tabular}{l|ccccccccc}
          & 0 & 1 &  2  &3   & 4 & 5 & 6 & 7 & 8 \\ \hline
    0    & 1 & -  & -    &-   & -  & -  & -  & - & - \\
    1   & -  & 15 & 20    & -  & -  & -  & -  & -  & -\\
    2   & -  & - & 70  &252 & 350 & 260 & 105 & 20 & 2\\
    3    & -  & - & - &   -  & -  & -  & - & 1 & -
    \end{tabular}}
    \end{subfigure}
    \label{Bettitable}
\end{figure}

\noindent $(2)$ Let $C \subseteq \nP^1 \times \nP^1$ be a smooth projective curve of genus $2$ defined by 
$$
x^2 \otimes (x^3+1) + 1 \otimes (x^2-x) \in S^2U \otimes S^3 U = H^0(\nP^1 \times \nP^1, \sO_{\nP^1}(2) \boxtimes \sO_{\nP^1}(3)).
$$
Consider the embedding $C \subseteq \nP^{11}$ given by $|(\sO_{\nP^1}(3) \boxtimes \sO_{\nP^1}(2))|_C|$.  Note that $\deg C = 13$. The Betti table of the tangent developable surface $T$ of $C$ in $\nP^{11}$ is the following.

\bigskip

\texttt{ \begin{tabular}{l|ccccccccccc}
         & 0 & 1 &  2  &3   & 4 & 5 & 6 & 7 & 8 & 9 & 10\\ \hline
    0    & 1 & -  & -    &-   & -  & -  & -  & - & - & -  & -\\
   1   & -  & 24 & 48    & -  & -  & -  & -  & -  & - & - & -\\
    2   & -  &  - & 153  & 864 & 1848 & 2304 & 1827 & 928 & 288 & 48 & 4 \\
    3    & -  & - & - &   -  & -  & -  & - & -  & - & 1 & -
\end{tabular}}
\end{example}

\bibliographystyle{ams}

\end{document}